\newtheorem{Theorem}{Theorem}[section]
\newtheorem{Definition}{Definition}[section]
\newtheorem{Example}{Example}[section]
\newtheorem{Lemma}{Lemma}[section]
\newtheorem{Proposition}{Proposition}[section]
\newtheorem{Remark}{Remark}[section]
\author{Vladimir Rovenski\footnote{Department of Mathematics, University of Haifa, 3498838 Haifa, Israel
       \newline e-mail: \texttt{vrovenski@univ.haifa.ac.il} } }
\title{On the splitting of weak nearly cosymplectic manifolds}
\begin{document}

\date{}

\maketitle

\begin{abstract}
Weak almost contact manifolds, i.e., the~linear complex structure on the contact distribu\-tion is approximated by a nonsingular skew-symmetric tensor,
defined by the author and R.\,Wolak (2022), allowed a new look at the theory of contact manifolds.
This~article studies the curvature and topology of new structures of this type, called the weak nearly cosymplec\-tic structure and weak nearly K\"{a}hler structure. We find conditions under which weak nearly cosymplectic mani\-folds become Riemannian products
and characterize 5-dimensional weak nearly cosymplec\-tic manifolds.
Our~theorems generalize results by H.\,Endo (2005) and A.\,Nicola--G.\,Dileo--I.\,Yudin (2018)
to the context of weak almost contact geometry.

\vskip1.5mm\noindent
\textbf{Keywords}:
weak nearly cosymplectic manifold, weak nearly K\"{a}hler manifold, Riemannian curvature tensor, Riemannian product.

\vskip1.5mm
\noindent
\textbf{Mathematics Subject Classifications (2010)} 53C15, 53C25, 53D15
\end{abstract}


\section{Introduction}
\label{sec:00-ns}

An important class of almost contact metric manifolds $M^{\,2n+1}(\varphi,\xi,\eta,g)$ is given by cosymplectic mani\-folds, i.e., $\nabla\varphi=0$, see \cite{blair2010riemannian}.
Any such manifold is locally the product of
a real~line and a K\"{a}hler manifold $\bar M^{\,2n}(J,\bar g)$, where $J^2 = -{\rm id}$ and $\bar\nabla J=0$.
A.~Gray defined in \cite{G-70} a nearly K\"{a}hler structure $(J,\bar g)$ using condition that the symmetric part of $\bar\nabla J$ vanishes.
D.~Blair and D.~Showers defined in \cite{blair1974} a nearly cosymplectic structure $(\varphi,\xi,\eta,g)$ using a similar condition
that only the symmetric part of $\nabla\varphi$ vanishes.
The curvature and topology of nearly cosymplectic manifolds have been studied by many authors,
e.g.,~\cite{blair2010riemannian,C-MD-2016,JKK-94,K-82,LV-D-2015,NDY-2018,ru-2023}.
These odd-dimensional counterparts of nearly K\"{a}hler manifolds are useful in classifying almost contact metric manifolds~\cite{CG-1990}.
A~nearly cosymplectic structure, identified with a section of a twistor bundle, defines a harmonic map~\cite{LV-D-2015}.
In dimensions greater than 5, a nearly cosymplectic manifold is locally isometric to the Riemannian product ${\mathbb R}\times\bar M^{2n}$
or $B^5\times\bar M^{2n-4}$, where $\bar M$ is a nearly K\"{a}hler manifold and $B$ is a nearly cosymplectic mani\-fold, see~\cite{C-MD-2016}.
Any 5-dimensional nearly cosymplectic manifold has Einstein metric of positive scalar curvature, see~\cite{C-MD-2016};
and a 3-dimensional nearly cosymplectic manifold is cosymplectic, see \cite{JKK-94}.
For example, the sphere $S^5$ is endowed with nearly cosymplectic structure induced by the almost Hermitian structure of~$S^6$.

In \cite{RP-2,RWo-2,rov-117}, we introduced metric structures on a smooth manifold that genera\-lize the almost contact,
cosymplectic, Sasakian, etc. metric structures.
These so-called ``weak" structures (the~linear complex structure on the contact distribution is approximated by a nonsingular skew-symmetric tensor)
made it possible to take a new look at the classical structures and find new applications.
In~\cite{rov-2023}, we defined new structures of this type, called the {weak nearly cosymplectic structure} and {weak nearly K\"{a}hler structure}, and asked the question: \textit{under what conditions are weak nearly cosymplectic manifolds locally the Riemannian products}?

In this article, we study the differentail geometry and topology of weak
almost contact metric manifolds
and find conditions \eqref{E-nS-10} and \eqref{E-nS-04c} that are satisfied by almost contact metric manifolds and
under which weak almost cosymplectic manifolds are locally Riemannian products.
In~Section~\ref{sec:01-ns}, following the introductory Section~\ref{sec:00-ns}, we recall necessary results on weak almost contact structures.
Section~\ref{sec:02-ns} formulates auxiliary lemmas on the geometry of weak almost cosymplectic and weak almost K\"{a}hler manifolds.
In~Section~\ref{sec:04-ns}, we generalize some results of the work~\cite{NDY-2018} and prove the splitting theorem
that a weak nearly cosymplectic manifold $M^{\,2n+1}\ (n>2)$ is locally the Riemannian product of either the real line and a weak nearly K\"{a}hler manifold, or, under certain conditions, a weak nearly K\"{a}hler manifold $\bar M^{\,2n-4}(\bar\varphi,\bar g)$ with the property $\bar\nabla(\bar\varphi^{\,2})=0$ and a weak nearly cosymplectic manifold of dimension~5.
In~Section~\ref{sec:app}, using the approach of \cite{E-2005} we prove auxiliary lemmas.
Our~proofs use the properties of new tensors, as well as classical constructions.

\section{Preliminaries}
\label{sec:01-ns}

A~\textit{weak almost contact structure} on a smooth manifold $M^{\,2n+1}\ (n\ge1)$ is a set $(\varphi,Q,\xi,\eta)$,
where $\varphi$ is a $(1,1)$-tensor, $\xi$ is a vector field (called Reeb vector field), $\eta$ is a
1-form and $Q$ is a nonsingular $(1,1)$-tensor on $TM$, satisfying, see \cite{RP-2,RWo-2},
\begin{equation}\label{E-nS-2.1}
 \varphi^2 = -Q + \eta\otimes \xi, \quad \eta(\xi)=1,\quad Q\,\xi=\xi .
\end{equation}
A ``small" (1,1)-tensor $\widetilde Q= Q-{\rm id}$ is a measure of the difference between a weak almost contact structure and an almost contact one.
By \eqref{E-nS-2.1}, $\ker\eta$ is a $2n$-dimensional distribution, which we assume to be $\varphi$-invariant (as in the classical theory \cite{blair2010riemannian}, where $Q={\rm id}$).
By this and \eqref{E-nS-2.1}, $\ker\eta$ is $Q$-invariant
and the following equalities are true:
\begin{align*}
 & \varphi\,\xi=0,\quad \eta\circ\varphi=0,\quad \eta\circ Q=\eta,\quad [Q,\,\varphi]:={Q}\circ\varphi - \varphi\circ{Q}=0, \\
 & [\widetilde{Q},\varphi]:=\widetilde{Q}\circ\varphi - \varphi\circ\widetilde{Q}=0,\quad \eta\circ\widetilde Q=0,\quad \widetilde{Q}\,\xi=0 .
\end{align*}
A weak almost contact structure $(\varphi,Q,\xi,\eta)$ on a manifold $M$ will be called {\it normal} if
the tensor
${N}^{\,(1)}(X,Y) = [\varphi,\varphi](X,Y) + 2\,d\eta(X,Y)\,\xi$
is identically zero.
Here,
$[\varphi,\varphi](X,Y) = \varphi^2 [X,Y] + [\varphi X, \varphi Y] - \varphi[\varphi X,Y] - \varphi[X,\varphi Y]$
is the Nijenhuis torsion of $\varphi$
and $d\eta(X,Y) = \frac12\,\{X(\eta(Y)) - Y(\eta(X)) - \eta([X,Y])\}$
is the exterior derivative of $\eta$,
see, for example, \cite{blair2010riemannian}.
 If there is a Riemannian metric $g$ on $M$ such that
\begin{align}\label{E-nS-2.2}
g(\varphi X,\varphi Y)= g(X,Q\,Y) -\eta(X)\,\eta(Y),\quad X,Y\in\mathfrak{X}_M,
\end{align}
then $(\varphi,Q,\xi,\eta,g)$ is called a {\it weak almost contact metric structure}.
A weak almost contact manifold $M^{\,2n+1}(\varphi,Q,\xi,\eta)$ endowed with a compatible Riemannian metric $g$ is called a \textit{weak almost contact metric manifold} and is denoted by $M^{\,2n+1}(\varphi,Q,\xi,\eta,g)$.
By \eqref{E-nS-2.2},
$\eta(X)=g(X,\xi)$ and
 $g(X,Q\,X)=g(\varphi X,\varphi X)>0$
are true ;
thus, the tensor $Q$ is symmetric and positive definite.

A~1-form $\eta$ on a smooth manifold $M^{\,2n+1}$ is
\textit{contact} if $\eta\wedge (d\eta)^n\ne0$, e.g.,~\cite{blair2010riemannian}.
A~\textit{weak contact metric structure} is a weak almost contact metric structure satisfying $d\eta = \Phi$,
where the {fundamental $2$-form} $\Phi$ is defined by
$\Phi({X},{Y})=g({X},\varphi {Y}),\ {X},{Y}\in\mathfrak{X}_M$.

\begin{Lemma}\label{L-contact}
For a weak contact metric manifold $M^{\,2n+1}(\varphi,Q,\xi,\eta,g)$, the 1-form $\eta$ is contact.
\end{Lemma}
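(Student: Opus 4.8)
The plan is to use the defining hypothesis $d\eta=\Phi$ and reduce the contact condition $\eta\wedge(d\eta)^n\neq0$ to showing $\eta\wedge\Phi^{\,n}\neq0$ at every point. I would invoke the standard linear-algebra criterion: for a $1$-form $\eta$ and a $2$-form $\Phi$ on a $(2n+1)$-dimensional space, $\eta\wedge\Phi^{\,n}$ is a volume form precisely when $\eta\neq0$ and the restriction of $\Phi$ to the $2n$-dimensional subspace $\ker\eta$ is nondegenerate. Since $\eta(\xi)=1$ the first condition is automatic (and $\xi\notin\ker\eta$), so everything comes down to proving that $\Phi|_{\ker\eta}$ is nondegenerate; note that $\Phi=d\eta$ is a genuine $2$-form, so antisymmetry is free of charge.

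For the nondegeneracy I would argue pointwise. First, $\iota_\xi\Phi=0$, because $\Phi(\xi,Y)=g(\xi,\varphi Y)=\eta(\varphi Y)=0$ by $\eta\circ\varphi=0$ together with $\eta(X)=g(X,\xi)$; thus $\Phi$ really is a form on the complement $\ker\eta$. Next, fix $0\neq X\in\ker\eta$. Since $\eta\circ\varphi=0$ gives $\varphi X\in\ker\eta$, the key computation is
\[
\Phi(X,\varphi X)=g(X,\varphi^2 X)=g\bigl(X,-QX+\eta(X)\,\xi\bigr)=-g(X,QX)<0,
\]
where $\eta(X)=0$ was used and the final inequality is the positive-definiteness of $Q$ recorded after \eqref{E-nS-2.2}. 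Hence no nonzero $X\in\ker\eta$ lies in the kernel of $\Phi|_{\ker\eta}$, which is exactly nondegeneracy.

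To conclude, I would make the criterion explicit: a nondegenerate skew-symmetric form on a $2n$-dimensional space has nonvanishing top exterior power, so $\Phi^{\,n}$ restricts to a volume form on $\ker\eta$. Evaluating $\eta\wedge\Phi^{\,n}$ on a basis $\{\xi,e_1,\dots,e_{2n}\}$ adapted to the splitting $TM=\langle\xi\rangle\oplus\ker\eta$, and using $\eta(e_i)=0$ and $\eta(\xi)=1$, leaves only the term $\eta(\xi)\,\Phi^{\,n}(e_1,\dots,e_{2n})\neq0$. Therefore $\eta\wedge(d\eta)^n=\eta\wedge\Phi^{\,n}\neq0$ and $\eta$ is contact. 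The only real subtlety, and the step I would be most careful about, is confirming that the weak identities \eqref{E-nS-2.1}--\eqref{E-nS-2.2} still force $Q$, and hence $\Phi|_{\ker\eta}$, to be nondegenerate exactly as in the classical case $Q=\mathrm{id}$; once the positive-definiteness of $Q$ is used, the argument is identical to the standard contact-metric computation.
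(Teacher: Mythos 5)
Your proof is correct and follows essentially the same route as the paper: both arguments reduce $\eta\wedge(d\eta)^n\neq0$ to the nondegeneracy of $\Phi=d\eta$ on $\ker\eta$, which ultimately rests on the positive definiteness of $Q$ recorded after \eqref{E-nS-2.2} and the $\varphi$-invariance of $\ker\eta$. The only difference is presentational: the paper certifies nondegeneracy by constructing an explicit orthogonal $Q$-eigenbasis $\{\xi,e_1,\varphi e_1,\ldots,e_n,\varphi e_n\}$ and evaluating $\eta\wedge(d\eta)^n$ on it, whereas you use the abstract criterion together with the pointwise computation $\Phi(X,\varphi X)=-g(X,QX)<0$.
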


\begin{proof}
Let $e_1\in(\ker\eta)_x$ be a unit eigenvector of the self-adjoint operator $Q$ with the real eigenvalue $\lambda_1$ at a point $x\in M$.
Then $\varphi\,e_1\in(\ker\eta)_x$ is orthogonal to $e_1$
and $Q(\varphi\,e_1) = \varphi(Q\,e_1) = \lambda_1\varphi\,e_1$.
Thus, the subspace orthogonal to the plane $span\{e_1,\varphi\,e_1\}$ is $Q$-invariant.
 There exists a unit vector $e_2\in(\ker\eta)_x$ such that $e_2\perp span\{e_1,\varphi\,e_1\}$
and $Q\,e_2= \lambda_2 e_2$  for some real $\lambda_2$.
Obviously, $Q(\varphi\,e_2) = \varphi(Q\,e_2) = \lambda_2\varphi\,e_2$.
All five vectors $\{\xi, e_1, \varphi\,e_1,e_2, \varphi\,e_2\}$ are nonzero and mutually orthogo\-nal.
Continuing in the same manner, we find an orthogonal basis $\{\xi, e_1, \varphi\,e_1,\ldots, e_n, \varphi\,e_n\}$ of $T_x M$.
Since $d\eta=\Phi$, we get
 $\eta\wedge (d\eta)^n(\xi, e_1, \varphi\,e_1,\ldots, e_n, \varphi\,e_n)
 =(d\eta)^n(e_1, \varphi\,e_1,\ldots, e_n, \varphi\,e_n) \ne0$,
i.e., $\eta$ is a contact 1-form.
\end{proof}

\begin{Definition}[\cite{rov-2023}]\rm
A weak almost contact metric structure is said to be \textit{weak almost cosymplectic}, if $d\Phi=d\eta=0$.
A normal weak almost cosymplectic structure is called \textit{weak cosymplectic}.
A weak almost contact metric structure is called \textit{weak nearly cosymplectic} if
\begin{equation}\label{E-nS-00b}
 (\nabla_Y\,\varphi)Z + (\nabla_Z\,\varphi)Y = 0,\quad Y,Z\in\mathfrak{X}_M.
\end{equation}
A Riemannian manifold $(\bar M^{\,2n}, \bar g)$ of even dimension equipped with a skew-symmetric {\rm (1,1)}-tensor $\bar\varphi$
such that the tensor $\bar\varphi^{\,2}$ is negative definite will be called
a \textit{weak nearly K\"{a}hler manifold},
if $(\bar\nabla_X\,\bar\varphi)X=0\ (X\in T\bar M)$, where $\bar\nabla$ is the Levi-Civita connection of $\bar g$,~or,
\begin{equation*}
 (\bar\nabla_X\,\bar\varphi)Y + (\bar\nabla_Y\,\bar\varphi)X=0,\quad X,Y\in \mathfrak{X}_{\bar M}.
\end{equation*}
Moreover, if  $\bar\nabla\,\bar\varphi=0$ is true, then $\bar M^{\,2n}(\bar\varphi, \bar g)$ will be called a \textit{weak K\"{a}hler manifold}.
\end{Definition}


From the equalities $(\nabla_\xi\,\varphi)\,\xi = 0$ and $\varphi\,\xi=0$
we find that $\xi$ is a geodesic vector field ($\nabla_\xi\,\xi=0$).
Recall \cite{rov-2023}~that
on a weak nearly cosymplectic manifold $M^{\,2n+1}(\varphi,Q,\xi,\eta,g)$ with the property
\begin{equation}\label{E-nS-10}
 (\nabla_X\,Q)Y=0,\quad X\in\mathfrak{X}_M,\ Y\in\ker\eta,
\end{equation}
the vector field $\xi$ is Killing (the Lie derivative $\pounds_\xi\,g=0$),
and using $\nabla_\xi\,\xi=0$, we get $\nabla_\xi\,Q=0$.
Note that if we extend \eqref{E-nS-10} for $Y=\xi$, then either $\nabla\xi=0$ or $\widetilde Q=0$:
\[
 0 = (\nabla_X\,Q)\xi=\nabla_X\,\xi - Q(\nabla_X\,\xi) = -\widetilde Q(\nabla_X\,\xi).
\]

\begin{Proposition}
A three-dimensional weak nearly cosymplectic structure $(\varphi,Q,\xi,\eta,g)$ satisfy\-ing \eqref{E-nS-10}
reduces to cosymplectic one.
\end{Proposition}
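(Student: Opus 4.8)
The plan is to show directly that $\nabla\varphi=0$, which is the cosymplectic condition. Since $\dim M=3$, write $h:=\nabla\xi$ for the $(1,1)$-tensor $hX=\nabla_X\xi$. Because \eqref{E-nS-10} makes $\xi$ Killing and $\nabla_\xi\,\xi=0$, the tensor $h$ is skew-symmetric, $h\xi=0$, and $g(hX,\xi)=\tfrac12\,X\big(g(\xi,\xi)\big)=0$, so $\operatorname{im}h\subset\ker\eta$. First I would record the two pointwise identities obtained by differentiating $\varphi\,\xi=0$ and invoking \eqref{E-nS-00b}: from $(\nabla_X\varphi)\xi=-\varphi\,\nabla_X\xi=-\varphi hX$ and $(\nabla_X\varphi)\xi+(\nabla_\xi\varphi)X=0$ one gets $(\nabla_\xi\varphi)X=\varphi hX$.

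The key structural observation is that the trilinear form $P(X,Y,Z):=g((\nabla_X\varphi)Y,Z)$ is totally antisymmetric. Indeed, $\varphi$ is $g$-skew-symmetric (the fundamental form $\Phi$ is a $2$-form), so $\nabla\varphi$ is skew in its last two arguments, while \eqref{E-nS-00b} makes it skew in its first two; antisymmetry under the two adjacent transpositions forces total antisymmetry. Hence $P$ is a $3$-form, and on the $3$-dimensional space $T_xM$ it is determined by a single value on any basis.

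Next I would choose a convenient basis. Restricting the symmetric operator $Q$ to the $Q$-invariant plane $\ker\eta$ yields a unit eigenvector $e_1\in(\ker\eta)_x$ with $Q e_1=\lambda e_1$, $\lambda>0$; then $\varphi\,e_1\perp e_1$ and $|\varphi\,e_1|^2=\lambda$ by \eqref{E-nS-2.2}, so $\{\xi,e_1,\varphi\,e_1\}$ is a basis of $T_xM$ (in fact $Q|_{\ker\eta}=\lambda\,\mathrm{id}$, since $Q$ commutes with $\varphi$). Evaluating $P$ on this basis via $(\nabla_\xi\varphi)e_1=\varphi h e_1$ and \eqref{E-nS-2.2} gives
\[
P(\xi,e_1,\varphi\,e_1)=g\big(\varphi h e_1,\varphi\,e_1\big)=g(h e_1,Q e_1)-\eta(h e_1)\,\eta(e_1)=\lambda\,g(h e_1,e_1)=0,
\]
the last equality because $h$ is skew-symmetric. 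Since $P$ is a $3$-form vanishing on a basis, $P\equiv0$, i.e. $\nabla\varphi=0$. Finally $(\nabla_X\varphi)\xi=-\varphi hX=0$ together with the injectivity of $\varphi$ on $\ker\eta$ (there $\varphi^2=-Q$ is nonsingular) give $hX=0$, so $\nabla\xi=0$ as well; thus the structure is cosymplectic.

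The main obstacle I anticipate is not the final computation but the two preparatory facts: verifying the total antisymmetry of $P$ (which genuinely uses both the skew-symmetry of $\varphi$ and the nearly cosymplectic identity \eqref{E-nS-00b}), and recognizing that \eqref{E-nS-10} is exactly what is needed to make $\xi$ Killing, so that the single surviving component $\lambda\,g(he_1,e_1)$ collapses by skew-symmetry of $h$.
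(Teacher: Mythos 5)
Your proof is correct in substance, but it takes a genuinely different route from the paper. The paper's argument is a two-line reduction: since $\dim\ker\eta=2$ and $Q$ commutes with $\varphi$, one has $Q|_{\ker\eta}=\lambda\,\mathrm{id}$, so the rescaling $\tilde\varphi=\lambda^{-1/2}\varphi$, $\tilde g|_{\ker\eta}=\lambda^{1/2}g|_{\ker\eta}$ produces a classical nearly cosymplectic structure, and the conclusion is then imported from Jun--Kim--Kim's theorem that every $3$-dimensional nearly cosymplectic manifold is cosymplectic. You instead prove the underlying fact directly in the weak setting: the trilinear form $P(X,Y,Z)=g((\nabla_X\varphi)Y,Z)$ is totally antisymmetric (skew in the last two slots by skew-symmetry of $\varphi$, skew in the first two by \eqref{E-nS-00b}), hence a $3$-form in dimension $3$, and its single component $P(\xi,e_1,\varphi e_1)=g(\varphi he_1,\varphi e_1)=g(he_1,Qe_1)=\lambda\,g(he_1,e_1)$ vanishes because \eqref{E-nS-10} makes $\xi$ Killing and therefore $h$ skew-symmetric. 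This is self-contained (it effectively reproves the cited classical theorem in the weak context) and yields the stronger conclusions $\nabla\varphi=0$ and $\nabla\xi=0$ for the original structure; the paper's route is shorter but leans on two external references. All your intermediate steps check out: $\varphi$ is indeed $g$-skew-symmetric for a weak almost contact metric structure, $\eta(he_1)=0$, and the injectivity of $\varphi$ on $\ker\eta$ (where $\varphi^2=-Q$ is nonsingular) correctly forces $h=0$.

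One small finishing touch is missing. Having shown $\nabla\varphi=0$, you declare the structure cosymplectic, but a cosymplectic structure in the strict sense requires $\varphi^2=-\mathrm{id}+\eta\otimes\xi$, whereas your structure still has $\varphi^2=-\lambda\,\mathrm{id}$ on $\ker\eta$. To literally ``reduce to a cosymplectic one'' you need the same constant rescaling the paper uses ($\lambda$ is constant by \eqref{E-nS-10}, and since $\nabla\xi=0$ the rescaling does not change the Levi-Civita connection, so parallelism of $\varphi$ is preserved). This is a one-line addendum, not a gap in the mathematics.
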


\begin{proof}
By \eqref{E-nS-2.1}, the symmetric tensor $Q$ has on the plane field $\ker\eta$ the form $\lambda\,{\rm id}_{\,\ker\eta}$ for some positive $\lambda\in\mathbb{R}$.
It was shown in \cite{rov-2023} that this structure reduces to the nearly cosymplectic structure $(\tilde\varphi,\xi,\eta,\tilde g)$, where
 $\tilde\varphi = \lambda^{\,-\frac12}\,\varphi$,\
 $\tilde  g|_{\,\ker\eta} = \lambda^{\,\frac12}\,g|_{\,\ker\eta}$,\
 $\tilde g(\xi,\,\cdot) = {g}(\xi,\,\cdot)$.
The 3-dimensional nearly cosymplectic structure $(\tilde\varphi,\xi,\eta,\tilde g)$ is cosymplectic, see \cite[Theorem~4]{JKK-94}.
\end{proof}

\begin{Example}[\cite{rov-2023}]\rm
Let $\bar M(\bar\varphi, \bar g)$ be a weak nearly K\"{a}hler manifold, i.e., $(\bar\nabla_X\,\bar\varphi)X=0$. 
To~build a weak nearly cosymplectic structure on the product $M=\bar M\times\mathbb{R}$
of $\bar M$ and a line $(\mathbb{R},\partial_t)$, take any point $(x, t)$ of $M$ and set
 $\xi = (0, \partial_t),\
 \eta =(0, dt),\
 \varphi(X, \partial_t) = (\bar\varphi X, 0),\
 Q(X, \partial_t) = (-\bar\varphi^{\,2} X, \partial_t)$,
where $X\in T_x\bar M$.
Note that if $\bar\nabla_X\,\bar\varphi^2=0$ for all $X\in T\bar M$, then \eqref{E-nS-10} is true.
\end{Example}

\begin{Remark}\rm
Any weak K\"{a}hler manifold is weak nearly K\"{a}hler.
Several authors studied the problem of finding skew-symmetric parallel 2-tensors (different from almost complex structures)
on a Riemannian manifold and classified them, e.g., \cite{H-2022}.
The~idea of considering the entire bundle of almost-complex structures compatible with a given metric led to the twistor construction
and then to twistor string theory.
Thus, it may be interesting to consider the entire bundle of weak (nearly) K\"{a}hler structures that are compatible with a given~metric.
\end{Remark}

For a Riemannian manifold $(M,g)$ equipped with a~Killing vector field ${\xi}$, we get, see \cite{YK-1985},
\begin{equation}\label{E-nS-04}
 \nabla_X\nabla_Y\,{\xi} - \nabla_{\nabla_X Y}\,{\xi} = R_{\,X,\,{\xi}}\,Y ,
\end{equation}
where $R_{{X},{Y}}Z=\nabla_X\nabla_Y Z -\nabla_Y\nabla_X Z -\nabla_{[X,Y]} Z$ is the curvature tensor, e.g., \cite{KN-69}.

The curvature tensor of nearly cosymplectic manifolds satisfies $g(R_{\,\xi, Z}\,\varphi X, \varphi Y) = 0$, see \cite{E-2005};
thus the contact distribution of nearly cosymplectic manifolds is \textit{curvature invariant}:
\begin{equation}\label{E-nS-04cc}
 R_{X,Y}Z\in\ker\eta,\quad X,Y,Z\in\ker\eta.
\end{equation}
For example, any 1-form $\eta$ on a real space form has the property \eqref{E-nS-04cc}.
We will prove, see
Lemma~\ref{L-R01}, that a weak nearly cosymplectic manifold satisfies \eqref{E-nS-04cc} if we assume a weaker condition
\begin{equation}\label{E-nS-04c}
 R_{\widetilde Q X,Y}Z\in\ker\eta,\quad X,Y,Z\in\ker\eta.
\end{equation}
Taking derivative of $g(\varphi V,Z)=-g(V,\varphi Z)$, we see that $\nabla_{Y}\varphi$ of a weak nearly cosymplectic manifold is skew-symmetric:
 $g((\nabla_{Y}\varphi) V, Z)=-g((\nabla_{Y}\varphi) Z, V)$.
Taking derivative of this,
we see that $\nabla^2_{X,Y}\varphi$ is skew-symmetric:
 $g((\nabla^2_{X,Y}\varphi)V, Z)=-g((\nabla^2_{X,Y}\varphi)Z, V)$.
Recall the Ricci identity
\begin{equation}\label{E-nS-05}
 g((\nabla^2_{X,Y}\varphi)V, Z) - g((\nabla^2_{Y,X}\varphi)V, Z) = g(R_{X,Y}\varphi V, Z) + g(R_{X,Y}V, \varphi Z),
\end{equation}
see \cite{E-2005},
where the second covariant derivative operator is given by $\nabla^2_{X,Y} = \nabla_{X}\nabla_{Y} - \nabla_{\nabla_{X}Y}$.

\section{Auxiliary lemmas}
\label{sec:02-ns}

In this section, we consider a weak nearly cosymplectic manifold $M^{\,2n+1}(\varphi,Q,\xi,\eta,g)$
with conditions \eqref{E-nS-10} and \eqref{E-nS-04c} and generalize some well known results on nearly cosymplectic manifolds.

We define a (1,1)-tensor field $h$ on $M$ as in the classical case, e.g., \cite{E-2005},
\begin{equation}\label{E-c-01}
  h = \nabla\xi.
\end{equation}
Note that $h=0$ if and only if
$\ker\eta$ is integrable, i.e., $[X,Y]\in\ker\eta\ (X,Y\in\ker\eta)$.
Since $\xi$ is a geodesic vector field ($\nabla_\xi\,\xi=0$), we get $h\,\xi=0$ and $h(\ker\eta)\subset\ker\eta$.
Since $\xi$ is a Killing vector field, the tensor $h$ is skew-symmetric:
 $g(h X,\, X) = g(\nabla_X\,\xi, X) = \frac12\,(\pounds_\xi\,g)(X,X) = 0$.
We~also get $\eta\circ h = 0$ and
  $d\,\eta(X,\,\cdot) = \nabla_X\,\eta = g(hX,\,\cdot)$.
The following lemma generalizes Lemma~3.1 in \cite{E-2005}.

\begin{Lemma}\label{L-nS-02}
For a weak nearly cosymplectic manifold $M^{\,2n+1}(\varphi,Q,\xi,\eta,g)$ we obtain
\begin{eqnarray}
\label{E-nS-01a}
 && (\nabla_X\,h)\,\xi = -h^2 X,\\
\label{E-nS-01c}
 && (\nabla_X\,\varphi)\,\xi = -\varphi\,h X .
\end{eqnarray}
Moreover, if the condition \eqref{E-nS-10} is true, then
\begin{eqnarray}
\label{E-nS-01b}
 && h\,\varphi + \varphi\,h =0\quad (h\ {\rm anticommutes\ with}\ \varphi),\\
\label{E-nS-01d}
 && h\,Q = Q\,h\quad (h\ {\rm commutes\ with}\ Q).
\end{eqnarray}
\end{Lemma}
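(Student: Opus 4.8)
The plan is to establish the four identities in order, treating \eqref{E-nS-01a} and \eqref{E-nS-01c} as immediate consequences of the defining relation $h=\nabla\xi$ from \eqref{E-c-01}, then bootstrapping \eqref{E-nS-01b} from \eqref{E-nS-01c} and, finally, deducing \eqref{E-nS-01d} from \eqref{E-nS-01b}. In this way the entire lemma reduces to a single genuinely nontrivial step.

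For \eqref{E-nS-01a} and \eqref{E-nS-01c} I would simply apply the product rule to the identities $h\,\xi=0$ and $\varphi\,\xi=0$. Since $\nabla_X\xi=hX$, one gets $(\nabla_X h)\xi=\nabla_X(h\xi)-h(\nabla_X\xi)=-h(hX)=-h^2X$ and likewise $(\nabla_X\varphi)\xi=\nabla_X(\varphi\xi)-\varphi(\nabla_X\xi)=-\varphi\,hX$. These use only $h\,\xi=\varphi\,\xi=0$ and the definition of $h$, so they hold without assuming \eqref{E-nS-10}.

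For \eqref{E-nS-01b} the key is to pair $(\nabla_X\varphi)Y$ with $\xi$. Using the skew-symmetry of $\nabla_X\varphi$ established above (a differentiated form of the skew-symmetry of $\varphi$, valid on any weak almost contact metric manifold) together with \eqref{E-nS-01c}, I would compute $g((\nabla_X\varphi)Y,\xi)=-g((\nabla_X\varphi)\xi,Y)=g(\varphi\,hX,Y)$. I then symmetrize in $X,Y$: adding the same identity with $X$ and $Y$ interchanged and invoking the weak nearly cosymplectic condition \eqref{E-nS-00b}, the left-hand side vanishes, leaving $g(\varphi\,hX,Y)+g(\varphi\,hY,X)=0$. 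Finally, rewriting $g(\varphi\,hX,Y)=-g(hX,\varphi Y)=g(X,h\varphi Y)$ via the skew-symmetries of $\varphi$ and of $h$, this becomes $g(X,\,h\varphi Y+\varphi\,hY)=0$ for all $X$, which is \eqref{E-nS-01b}. The only place where \eqref{E-nS-10} enters is precisely here: the skew-symmetry of $h$ rests on $\xi$ being Killing, and on a weak structure this is guaranteed exactly by \eqref{E-nS-10}. I regard verifying this dependence, and checking that each skew-symmetry is applied to the correct argument, as the main (and only) point requiring care.

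For \eqref{E-nS-01d} I would use that \eqref{E-nS-2.1} restricts on $\ker\eta$ to $Q=-\varphi^2$, while $hX\in\ker\eta$ because $\eta\circ h=0$. Applying the anticommutation \eqref{E-nS-01b} twice then gives, on $\ker\eta$, $hQ=-h\varphi^2=\varphi\,h\varphi=-\varphi^2h=Qh$; on the span of $\xi$ both sides vanish since $Q\,\xi=\xi$ and $h\,\xi=0$. Hence $h$ commutes with $Q$ on all of $TM$. Thus \eqref{E-nS-01d} is not an independent computation but a formal corollary of \eqref{E-nS-01b}, reflecting that it is a purely ``weak'' phenomenon (it is vacuous when $Q=\mathrm{id}$).
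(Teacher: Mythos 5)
Your proposal is correct and follows essentially the same route as the paper: \eqref{E-nS-01a} and \eqref{E-nS-01c} by differentiating $h\,\xi=0$ and $\varphi\,\xi=0$, \eqref{E-nS-01b} by symmetrizing the $\xi$-component of $(\nabla_X\varphi)Y$ in $X,Y$ and invoking \eqref{E-nS-00b} together with the skew-symmetry of $h$ (where \eqref{E-nS-10} enters, exactly as you note), and \eqref{E-nS-01d} as a formal consequence of \eqref{E-nS-01b}, \eqref{E-nS-2.1}, $h\,\xi=0$ and $\eta\circ h=0$. The only cosmetic difference is that the paper obtains $g((\nabla_X\varphi)Y,\xi)=-g(\varphi Y,hX)$ by differentiating $g(\varphi Y,\xi)=0$ rather than via the skew-symmetry of $\nabla_X\varphi$ and \eqref{E-nS-01c}, which is equivalent.
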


\begin{proof}
Differentiating the equality $h\,\xi=0$ and using
\eqref{E-c-01}, we obtain \eqref{E-nS-01a}:
\[
 0=\nabla_X\,(h\,\xi) = (\nabla_X\,h)\,\xi +h(\nabla_X\,\xi) =(\nabla_X\,h)\,\xi +h^2 X.
\]
Differentiating the equality $g(\varphi\,Y,\xi)=0$ yields
 $0=X g(\varphi\,Y,\xi) = g((\nabla_X\,\varphi)Y, \xi) +g(\varphi\,Y, h X)$.
Summing this with the equality $g((\nabla_Y\,\varphi)X, \xi) +g(\varphi\,X, h Y)=0$ and applying \eqref{E-nS-00b}, gives \eqref{E-nS-01b}:
\[
 0 = g(\varphi\,Y, h X) + g(\varphi\,X, h Y) = - g( (h\,\varphi + \varphi\,h)X , Y).
\]
Using $\varphi\,\xi=0$ and the definition \eqref{E-c-01}, we get \eqref{E-nS-01c}:
 $(\nabla_X\,\varphi)\,\xi = -\varphi(\nabla_X\,\xi)= -\varphi\,h X$.
By \eqref{E-nS-01b} and \eqref{E-nS-2.1}, using the equalities $h\,\xi=0$ and $\eta\circ h=0$, we obtain \eqref{E-nS-01d}.
\end{proof}

The following four lemmas generalize certain formulas in Lemmas~3.2 -- 3.5 in \cite{E-2005}.

\begin{Lemma}\label{L-R02}
For a weak nearly cosymplectic manifold satisfying \eqref{E-nS-10} we obtain
\begin{eqnarray}\label{E-3.29}
 &&\hskip-7mm
 g((\nabla_X\,\varphi)\varphi Y, Z) = g((\nabla_X\varphi)Y, \varphi Z) + \eta(Y) g(hX, Z) +\eta(Z) g(h X, Q Y) ,\\
\label{E-3.30}
 &&\hskip-7mm
 g((\nabla_{\varphi X}\varphi)Y, Z) = g((\nabla_X \varphi)Y, \varphi Z) + \eta(X) g(hZ, Y) + \eta(Z) g(h X, Q Y) ,\\
\label{E-3.31}
 &&\hskip-13mm
 g((\nabla_{\varphi X} \varphi)\varphi Y, Z) = g((\nabla_X \varphi)Q Z, Y) {+} \eta(X) g(hZ, \varphi Y) {+} \eta(Y) g(hX, \varphi Z)
  {-}\eta(Z) g(\varphi h X, \widetilde Q Y) .
\end{eqnarray}
\end{Lemma}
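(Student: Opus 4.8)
The plan is to extract all three formulas from a single master identity obtained by differentiating the structure equation $\varphi^{2}=-Q+\eta\otimes\xi$ of \eqref{E-nS-2.1}, and then to bootstrap \eqref{E-3.30} and \eqref{E-3.31} out of \eqref{E-3.29} by repeated use of the nearly cosymplectic symmetry \eqref{E-nS-00b}. Throughout I would freely use that $\varphi$ and each $\nabla_{X}\varphi$ are $g$-skew-symmetric, that $\nabla_{X}\xi=hX$ with $(\nabla_{X}\eta)(\cdot)=g(hX,\cdot)$, and relations \eqref{E-nS-01c} and \eqref{E-nS-01d}. Differentiating $\varphi^{2}=-Q+\eta\otimes\xi$ and using $(\nabla_{X}\varphi^{2})=(\nabla_{X}\varphi)\varphi+\varphi(\nabla_{X}\varphi)$ gives $(\nabla_{X}\varphi)\varphi Z+\varphi(\nabla_{X}\varphi)Z=-(\nabla_{X}Q)Z+g(hX,Z)\,\xi+\eta(Z)\,hX$. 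By \eqref{E-nS-10} the tensor $\nabla_{X}Q$ annihilates $\ker\eta$, and $(\nabla_{X}Q)\xi=hX-Q\,hX=-\widetilde Q\,hX$, so $(\nabla_{X}Q)Z=-\eta(Z)\widetilde Q\,hX$ for all $Z$; since $\widetilde Q\,hX+hX=Q\,hX$ this reduces to the master identity
\[
\varphi(\nabla_{X}\varphi)Z=-(\nabla_{X}\varphi)\varphi Z+\eta(Z)\,Q\,hX+g(hX,Z)\,\xi .
\]
To prove \eqref{E-3.29} I flip $\varphi Y$ to the other slot by the two skew symmetries, $g((\nabla_{X}\varphi)\varphi Y,Z)=-g((\nabla_{X}\varphi)Z,\varphi Y)=g(\varphi(\nabla_{X}\varphi)Z,Y)$, insert the master identity, and recognize $-g((\nabla_{X}\varphi)\varphi Z,Y)=g((\nabla_{X}\varphi)Y,\varphi Z)$ by skew symmetry of $\nabla_{X}\varphi$; using $g(Q\,hX,Y)=g(hX,QY)$ this lands exactly on \eqref{E-3.29}.

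For \eqref{E-3.30} I use \eqref{E-nS-00b} twice. Writing $(\nabla_{\varphi X}\varphi)Y=-(\nabla_{Y}\varphi)\varphi X$, expanding the right-hand side by \eqref{E-3.29} with the roles of $X$ and $Y$ interchanged, and then replacing $(\nabla_{Y}\varphi)X=-(\nabla_{X}\varphi)Y$, I arrive at $g((\nabla_{X}\varphi)Y,\varphi Z)-\eta(X)g(hY,Z)-\eta(Z)g(hY,QX)$. The two correction terms are reshaped into the displayed ones via $-g(hY,Z)=g(hZ,Y)$ (skew symmetry of $h$) and $-g(hY,QX)=g(hX,QY)$ (skew symmetry of $h$ together with \eqref{E-nS-01d}).

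Formula \eqref{E-3.31} then comes from a substitution cascade: set $Y\mapsto\varphi Y$ in \eqref{E-3.30}, rewrite the resulting term $g((\nabla_{X}\varphi)\varphi Y,\varphi Z)$ by \eqref{E-3.29} with $Z\mapsto\varphi Z$, and use $\varphi^{2}Z=-QZ+\eta(Z)\xi$ together with $\eta\circ\varphi=0$. Skew symmetry of $\nabla_{X}\varphi$ converts $-g((\nabla_{X}\varphi)Y,QZ)$ into the leading term $g((\nabla_{X}\varphi)QZ,Y)$, while \eqref{E-nS-01c} gives $g((\nabla_{X}\varphi)Y,\xi)=g(\varphi hX,Y)$; the accumulated $\eta(Z)$-terms $g(\varphi hX,Y)+g(hX,Q\varphi Y)$ then collapse to $-g(\varphi hX,\widetilde Q Y)$ by means of $\varphi Q=Q\varphi$ and skew symmetry of $\varphi$.

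The master identity and the twofold application of \eqref{E-nS-00b} are routine; the real difficulty is bookkeeping of the Reeb-direction ($\eta$) correction terms and the precise placement of $Q$ and $\widetilde Q$, which are invisible in the classical case $Q=\mathrm{id}$. Holding these operators in their displayed positions forces repeated, careful use of $hQ=Qh$, $h\varphi=-\varphi h$, and the two skew symmetries; the substitution step producing \eqref{E-3.31} and the simplification of its final $\widetilde Q$-term is the most delicate point, since one must combine the commutation $\varphi Q=Q\varphi$ with the skew symmetry of $\varphi$ in exactly the right order.
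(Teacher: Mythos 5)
Your proof is correct and follows essentially the same route as the paper's: the paper obtains \eqref{E-3.29} by differentiating the compatibility condition \eqref{E-nS-2.2} (equivalent, given the skew-symmetry of $\varphi$ and the symmetry of $Q$, to your differentiation of $\varphi^2=-Q+\eta\otimes\xi$), then derives \eqref{E-3.30} from \eqref{E-3.29} via \eqref{E-nS-00b}, and \eqref{E-3.31} by substituting $Y\mapsto\varphi Y$ into \eqref{E-3.30} and using \eqref{E-3.29} and \eqref{E-nS-2.1} --- exactly your substitution cascade. Your computation of $(\nabla_X Q)\,\xi=-\widetilde Q\,hX$ and the collapse of the $\eta(Z)$-terms to $-g(\varphi hX,\widetilde QY)$ are handled correctly.
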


\begin{proof}
As in the proof of \cite[Lemma~3.4]{E-2005},
differentiating \eqref{E-nS-2.2} and using \eqref{E-nS-01b}, \eqref{E-nS-10} and the skew-symmetry of $\nabla_X\,\varphi$, we get \eqref{E-3.29}.
We obtain \eqref{E-3.30} from \eqref{E-3.29} by the condition~\eqref{E-nS-00b}.
Replacing $Y$ by $\varphi Y$ in \eqref{E-3.30} and using \eqref{E-3.29} and \eqref{E-nS-2.1}, we get \eqref{E-3.31}.
\end{proof}

\begin{Lemma}\label{L-R01}
The curvature tensor of a weak nearly cosymplectic manifold satisfies the equality
\begin{eqnarray}\label{E-3.4}
 g(R_{\varphi X,Y}Z, V) +g(R_{X,\varphi Y}Z, V) +g(R_{X,Y}\varphi Z, V) +g(R_{X,Y}Z, \varphi V) = 0.
\end{eqnarray}
Moreover, if the conditions \eqref{E-nS-10} and \eqref{E-nS-04c} are true, then
\begin{eqnarray}
\label{E-nS-04ccc}
 && g(R_{\,\xi, Z}\,\varphi X,\varphi Y) = 0, \\
\label{E-3.6}
&& g(R_{\varphi X,\varphi Y} Z, V) = g(R_{X, Y}\varphi Z, \varphi V) - \frac12\,\delta(X,Y,Z,V), \\
\label{E-3.5}
\nonumber
&& g(R_{\varphi X,\varphi Y}\varphi Z, \varphi V) = g(R_{Q X, Q Y}Z, V) -\eta(X)\,g(R_{\,\xi,Q Y}Z, V) \\
&&\quad  +\,\eta(Y)\,g(R_{\,\xi, Q X}Z, V) + \frac12\,\delta(\varphi X,\varphi Y,Z,V),
\end{eqnarray}
where
 $\delta(X,Y,Z,V) = g(R_{X, Y}\widetilde Q Z, V) +g(R_{X, Y}Z, \widetilde Q V) -g(R_{\widetilde Q X, Y}Z, V) -g(R_{X, \widetilde Q Y}Z, V)$.
\end{Lemma}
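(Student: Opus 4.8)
The plan is to prove the four identities in order, exploiting that \eqref{E-3.4} holds for \emph{every} weak nearly cosymplectic manifold (neither \eqref{E-nS-10} nor \eqref{E-nS-04c} is needed), whereas the remaining three build on it. For \eqref{E-3.4} I would first observe that, by the $g$-skew-symmetry of $\varphi$ together with the weak nearly cosymplectic condition \eqref{E-nS-00b}, the tensor $S(X,Y,Z):=g((\nabla_X\varphi)Y,Z)$ is totally skew-symmetric, i.e.\ a $3$-form; a one-line computation gives $(\nabla_X\Phi)(Y,Z)=-S(X,Y,Z)$, so $S$ equals, up to a nonzero constant, the exterior derivative $d\Phi$ of the fundamental form and is therefore closed, $dS=0$. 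It remains to identify the left-hand side of \eqref{E-3.4} with $dS$. Setting $T(X,Y,Z,V)=g((\nabla^2_{X,Y}\varphi)Z,V)$, one checks that $T=\nabla S$ and that $T$, like $S$, is skew in its last three arguments (skewness in $(Z,V)$ is the skew-symmetry of $\nabla^2\varphi$ recorded before \eqref{E-nS-05}, skewness in $(Y,Z)$ follows by differentiating \eqref{E-nS-00b}). Using only the pair-symmetry of $R$, the left-hand side of \eqref{E-3.4} equals $\rho(X,Y,Z,V)+\rho(Z,V,X,Y)$ with $\rho(A,B,C,D):=g(R_{A,B}\varphi C,D)+g(R_{A,B}C,\varphi D)$; by the Ricci identity \eqref{E-nS-05} each $\rho$ is the antisymmetrization of $T$ in its first two arguments, and the skewness of $T$ in the last three slots turns this sum into the complete alternation of $T$, which up to a nonzero constant is $dS(X,Y,Z,V)$. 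Hence \eqref{E-3.4} reduces to $dS=0$.

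For \eqref{E-nS-04ccc} -- which I expect to be the main obstacle -- I would specialize \eqref{E-3.4} to $X=\xi$, dropping one term via $\varphi\xi=0$, and then substitute $\varphi$ into the free slots, replacing $\varphi^2$ by $-Q+\eta\otimes\xi$, so as to isolate $g(R_{\xi,Z}\varphi X,\varphi Y)$ against a background of Reeb-curvatures $g(R_{\xi,\cdot}\,\cdot,\cdot)$ and $\widetilde Q$-twisted terms. Here both hypotheses enter decisively: \eqref{E-nS-10} makes $\xi$ Killing, so \eqref{E-nS-04} gives $(\nabla_X h)Y=R_{X,\xi}Y$, and together with $h\varphi+\varphi h=0$ and $h\,Q=Q\,h$ from Lemma~\ref{L-nS-02} and the formulas of Lemma~\ref{L-R02} the Reeb-curvatures are re-expressed through $h$; the condition \eqref{E-nS-04c}, after moving $\widetilde Q$ into the first slot by the symmetries of $R$ and invoking $g(R_{\widetilde Q\,\cdot,\cdot}\,\cdot,\xi)=0$, discards every $\widetilde Q$-twisted contribution. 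The surviving terms then cancel, yielding $g(R_{\xi,Z}\varphi X,\varphi Y)=0$. I expect the delicate point to be exactly this combinatorial cancellation once the small tensor $\widetilde Q$ is switched on, since in the classical case $\widetilde Q=0$ all these correction terms are simply absent.

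Finally, \eqref{E-3.6} and \eqref{E-3.5} are consequences of \eqref{E-3.4} and \eqref{E-nS-04ccc}. For \eqref{E-3.6} I would apply \eqref{E-3.4} twice: substitute $\varphi X$ into the first curvature slot, re-expand the resulting $g(R_{\varphi X,Y}\,\cdot,\cdot)$ by \eqref{E-3.4} once more, and everywhere replace $\varphi^2=-\mathrm{id}-\widetilde Q+\eta\otimes\xi$. The $-\mathrm{id}$ parts reproduce $g(R_{X,Y}\varphi Z,\varphi V)$, the $-\widetilde Q$ parts assemble (after symmetrizing the two applications, which is where the factor $\frac12$ appears) into $-\frac12\,\delta(X,Y,Z,V)$, and the $\eta\otimes\xi$ parts are Reeb-curvatures annihilated by \eqref{E-nS-04ccc}. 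For \eqref{E-3.5} no new idea is needed: substituting $X\mapsto\varphi X$ and $Y\mapsto\varphi Y$ in the already proved \eqref{E-3.6}, expanding $\varphi^2=-Q+\eta\otimes\xi$ in the first pair and using $R_{\xi,\xi}=0$, one reads off precisely the right-hand side of \eqref{E-3.5}, with $\delta$ evaluated at $(\varphi X,\varphi Y,Z,V)$ and the two Reeb terms $+\eta(Y)g(R_{\xi,QX}Z,V)-\eta(X)g(R_{\xi,QY}Z,V)$ arising from the cross terms. Throughout, the only genuinely new input beyond Endo's classical scheme is the systematic tracking of $\widetilde Q=Q-\mathrm{id}$, collected into the correction tensor $\delta$.
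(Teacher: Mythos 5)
Your proof of \eqref{E-3.4} is correct and takes a genuinely different, more conceptual route than the paper: the paper follows Endo's scheme, chaining the Ricci identity \eqref{E-nS-05} with two uses of the first Bianchi identity through the intermediate equations \eqref{E-3.7}--\eqref{E-3.12}, whereas you observe that $S(X,Y,Z)=g((\nabla_X\varphi)Y,Z)$ is a $3$-form proportional to $d\Phi$, that the left-hand side of \eqref{E-3.4} is exactly $dS(X,Y,Z,V)$ (via $\rho(X,Y,Z,V)+\rho(Z,V,X,Y)$ and the skewness of $\nabla^2\varphi$ in its last three slots), and hence that \eqref{E-3.4} is just $d^{\,2}\Phi=0$. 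I checked the normalizations and this works; it is shorter than the paper's argument. Your derivation of \eqref{E-3.5} from \eqref{E-3.6} also coincides with the paper's.

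The genuine gap is at \eqref{E-nS-04ccc}, which you yourself flag as the main obstacle and then dispose of with ``the surviving terms then cancel.'' This is the crux of the lemma, and the one place where the coefficient bookkeeping is decisive: the paper's route \eqref{E-3.13}--\eqref{E-3.22} produces, by repeatedly substituting $\varphi$ into \eqref{E-3.4} and expanding $\varphi^2=-Q+\eta\otimes\xi$, an identity of the form $3\,g(R_{\xi,Z}\varphi X,\varphi Y)=(\hbox{terms each killed by \eqref{E-nS-04c} and by }\delta(\varphi X,\varphi Y,Z,\xi)=0)$, and it is the nonvanishing of the coefficient $3$ in front of the target that yields the conclusion. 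Nothing in your sketch identifies this mechanism, and your proposed detour through $h$ --- re-expressing the Reeb curvatures via $(\nabla_Xh)Y=R_{X,\xi}Y$ together with Lemmas~\ref{L-nS-02} and \ref{L-R02} --- is not what does the work; worse, the explicit formula \eqref{E-3.24} for $\nabla h$ is only established in Lemma~\ref{L-nS-04} \emph{using} \eqref{E-nS-04ccc}, so leaning on it here risks circularity. A second, smaller inaccuracy: in your derivation of \eqref{E-3.6} the $\eta\otimes\xi$ cross-terms are not ``annihilated by \eqref{E-nS-04ccc}'' --- a term such as $\eta(X)\,g(R_{\xi,Y}Z,V)$ is nonzero for general $Z,V$ (take $Z=\xi$ to get $\eta(X)\,g(h^2Y,V)$); in the paper these four terms cancel only in the aggregate, using \eqref{E-3.23b} and the symmetry of $h^2$.
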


\begin{Lemma}\label{L-R03}
For a weak nearly cosymplectic manifold satisfying \eqref{E-nS-10} and \eqref{E-nS-04c}, we obtain
\begin{eqnarray}\label{E-3.50}
 g((\nabla_{X}\,\varphi)Y, \varphi h Z) = \eta(X)\,g(hY, hQZ) -\eta(Y)\,g(hX, hQZ).
\end{eqnarray}
\end{Lemma}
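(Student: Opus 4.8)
The plan is to peel off the two terms on the right one at a time and then feed in the curvature hypothesis through \eqref{E-nS-04ccc}. Write $A:=g((\nabla_X\varphi)Y,\varphi hZ)$. Since $h(\ker\eta)\subset\ker\eta$ and $h\xi=0$, the vector $hZ$ lies in $\ker\eta$, so $\eta(hZ)=0$; moreover $Q(hZ)=h(QZ)$ by \eqref{E-nS-01d}. First I would substitute $hZ$ into the second slot of \eqref{E-3.29} and use the skew-symmetry $g((\nabla_X\varphi)\varphi hZ,Y)=-A$, which gives
\[
 A=-\,g((\nabla_X\varphi)hZ,\varphi Y)-\eta(Y)\,g(hX,hQZ).
\]
This already produces the second term $-\eta(Y)\,g(hX,hQZ)$, so the whole statement is reduced to proving $B:=g((\nabla_X\varphi)hZ,\varphi Y)=-\eta(X)\,g(hY,hQZ)$.

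Next I would split $X=\eta(X)\xi+X^{\top}$ with $X^{\top}\in\ker\eta$. For the $\xi$-part, the symmetry \eqref{E-nS-00b} gives $(\nabla_\xi\varphi)hZ=-(\nabla_{hZ}\varphi)\xi$, and \eqref{E-nS-01c} turns this into $\varphi h^{2}Z$; then by \eqref{E-nS-2.2}, $\eta\circ h=0$ and \eqref{E-nS-01d} one gets $g(\varphi h^{2}Z,\varphi Y)=g(h^{2}Z,QY)=-g(hY,hQZ)$, which is exactly the wanted $\xi$-contribution. Thus everything comes down to the contact-distribution statement $B=0$ for $X\in\ker\eta$.

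For this I would bring in curvature. Differentiating $\varphi h=-h\varphi$ (equivalently \eqref{E-nS-01b}) and using the Killing relation $(\nabla_{X}h)Z=R_{X,\xi}Z$, obtained from \eqref{E-nS-04} by writing $\nabla_\bullet\xi=h$, yields the pointwise identity
\[
 (\nabla_X\varphi)hZ=-\,h(\nabla_X\varphi)Z-R_{X,\xi}\varphi Z-\varphi R_{X,\xi}Z .
\]
Pairing with $\varphi Y$, the crucial simplification is $g(R_{X,\xi}\varphi Z,\varphi Y)=0$, which is precisely \eqref{E-nS-04ccc} and is where \eqref{E-nS-04c} enters; the last term is controlled by \eqref{E-nS-2.2} together with the curvature invariance \eqref{E-nS-04cc} (for $X,Z\in\ker\eta$ one has $g(R_{X,\xi}Z,W)=\eta(R_{Z,W}X)=0$ whenever $W\in\ker\eta$) and \eqref{E-nS-01a}.

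The main obstacle is the first term $-\,h(\nabla_X\varphi)Z$. For $X\in\ker\eta$ both curvature terms above drop out — the middle one by \eqref{E-nS-04ccc}, the last one by \eqref{E-nS-04cc} — so that the identity merely re-expresses $B$ again through a quantity of the same shape as $A$. Consequently every manipulation built solely from \eqref{E-3.29}, \eqref{E-3.30} and the nearly cosymplectic symmetry \eqref{E-nS-00b} is \emph{consistent} with, but does not \emph{determine}, the $\ker\eta$-component of $B$. To pin it down one must use genuinely second-order data: differentiating \eqref{E-nS-00b} makes $g((\nabla^2_{W,X}\varphi)Y,Z)$ skew in its last three arguments $X,Y,Z$, and feeding this into the Ricci identity \eqref{E-nS-05} expresses $\nabla^2\varphi$ through the curvature tensor; substituting back and using \eqref{E-nS-04ccc} together with the already established symmetry \eqref{E-3.6} is what forces the residual term to vanish. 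Carrying out this curvature bookkeeping cleanly, in the spirit of \cite{E-2005}, is the delicate step of the proof.
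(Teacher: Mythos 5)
Your two reductions are correct and essentially reproduce the final step of the paper's argument: substituting $hZ$ for $Y$ in \eqref{E-3.29} (with $\eta(hZ)=0$ and $Qh=hQ$ from \eqref{E-nS-01d}) does peel off the term $-\eta(Y)\,g(hX,hQZ)$, and your evaluation of the $\xi$-component of $X$ via \eqref{E-nS-00b} and \eqref{E-nS-01c} correctly produces $-\eta(X)\,g(hY,hQZ)$. This is exactly how the paper pins down the operators $c_1,c_2,c_3$ at the very end of its proof, by setting $X$, $Y$ or $Z$ equal to $\xi$ in \eqref{E-3.48b}.

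However, there is a genuine gap, and you name it yourself: everything hinges on showing that $g((\nabla_X\varphi)Y,\varphi hZ)$ has no component when $X,Y,Z\in\ker\eta$, i.e., on first establishing an identity of the shape \eqref{E-3.48b} in which the left-hand side is a pure sum of $\eta$-terms. Your first-order attempt (differentiating $\varphi h+h\varphi=0$ and inserting $(\nabla_Xh)Y=R_{X,\xi}Y$) is, as you observe, circular — it rewrites $B$ as another expression of the same type — and your closing paragraph only sketches the strategy that would close the loop ("differentiate \eqref{E-nS-00b}, feed it into the Ricci identity \eqref{E-nS-05}, use \eqref{E-nS-04ccc} and \eqref{E-3.6}") while explicitly deferring "the curvature bookkeeping". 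That bookkeeping \emph{is} the proof: in the paper it occupies the entire chain \eqref{E-3.32}--\eqref{E-3.48b}, starting from the covariant derivative of \eqref{E-3.29}, passing through the expression of $\nabla^2\varphi$ in terms of curvature via \eqref{EF-nS-01} and \eqref{E-3.11}, and requiring repeated use of Lemmas~\ref{L-nS-02} and \ref{L-R01}, several symmetrizations in the four arguments, and careful tracking of the $\widetilde Q$- and $\delta$-terms that have no classical counterpart. Nothing in your proposal verifies that the residual $\ker\eta$-component actually cancels after these substitutions (in the weak setting this is not automatic — it is where the hypotheses \eqref{E-nS-10} and \eqref{E-nS-04c} do real work), so the central claim of the lemma remains unproven.
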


\begin{Lemma}\label{L-nS-04}
For a weak nearly cosymplectic manifold $M^{\,2n+1}(\varphi,Q,\xi,\eta,g)$ satisfying
\eqref{E-nS-04c}, we get
\begin{eqnarray}\label{E-3.24}
 && (\nabla_X\,h)Y = g(h^2 X, Y)\,\xi - \eta(Y)\,h^2 X , \\
\label{E-3.23}
 && R_{\,\xi, X}Y = -(\nabla_X\,h)Y, \\
\label{E-3.25}
 && {\rm Ric}\,(\xi, Z) = -\eta(Z)\,{\rm tr}\,h^2 .
\end{eqnarray}
In particular, $\nabla_\xi\,h=0$ and ${\rm tr}(h^2) = const$.
By \eqref{E-3.24}--\eqref{E-3.23}, we get
\begin{equation}\label{E-3.23b}
  g(R_{\,\xi, X}Y, Z) = -g((\nabla_X\,h)Y, Z) = \eta(Y)\,g(h^2 X, Z) -\eta(Z)\,g(h^2 X, Y).
\end{equation}
\end{Lemma}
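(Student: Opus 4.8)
The plan is to establish the three displayed identities in order, read off $\nabla_\xi\,h=0$ and ${\rm tr}(h^2)={\rm const}$ as by-products, and finish with \eqref{E-3.23b}. Throughout I would use the already-recorded structural facts $h\,\xi=0$, $\eta\circ h=0$, $h(\ker\eta)\subset\ker\eta$, the skew-symmetry of $h$, and that $\xi$ is Killing and geodesic.

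First I would prove \eqref{E-3.23}. Since $h=\nabla\xi$ and $\xi$ is Killing, the identity \eqref{E-nS-04} gives $(\nabla_X\,h)Y=\nabla_X\nabla_Y\,\xi-\nabla_{\nabla_X Y}\,\xi=R_{X,\xi}Y=-R_{\xi,X}Y$, which is exactly \eqref{E-3.23}. Setting $Y=\xi$ here and inserting \eqref{E-nS-01a} produces the boundary value $R_{\xi,X}\xi=-(\nabla_X\,h)\xi=h^2 X$, which will supply the $\xi$-components later.

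The core is \eqref{E-3.24}, which I would first prove in the bilinear form \eqref{E-3.23b} and reconstruct from two inputs: the boundary value just found, and the vanishing $g(R_{\xi,X}Y,Z)=0$ for $Y,Z\in\ker\eta$ and arbitrary $X$. For the vanishing I would use that on $\ker\eta$ one has $\varphi^2=-Q$ with $Q$ nonsingular, so $\varphi$ restricts to a bijection of $\ker\eta$; writing $Y=\varphi Y'$ and $Z=\varphi Z'$ with $Y',Z'\in\ker\eta$ and invoking \eqref{E-nS-04ccc} yields $g(R_{\xi,X}Y,Z)=g(R_{\xi,X}\varphi Y',\varphi Z')=0$. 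Then, decomposing $Y$ and $Z$ along the orthogonal splitting $TM=\ker\eta\oplus\mathbb{R}\,\xi$ and expanding $g(R_{\xi,X}Y,Z)$ by bilinearity, the $\ker\eta\times\ker\eta$ part vanishes, the $\xi\times\xi$ part vanishes by the antisymmetry of $R_{\xi,X}$ in its last two slots, and the two cross terms are evaluated by $R_{\xi,X}\xi=h^2 X$; this assembles into \eqref{E-3.23b}, and raising the last index (equivalently, using the skew-symmetry of $\nabla_X\,h$) gives \eqref{E-3.24}.

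The remaining claims are bookkeeping. For \eqref{E-3.25} I would trace \eqref{E-3.23b}: in an orthonormal frame $\{e_i\}$, ${\rm Ric}(\xi,Z)=\sum_i g(R_{e_i,\xi}Z,e_i)$, and substituting \eqref{E-3.23b} the term $\sum_i\eta(e_i)\,g(h^2 e_i,Z)=g(h^2\xi,Z)=0$ collapses because $h\,\xi=0$, leaving ${\rm Ric}(\xi,Z)=-\eta(Z)\,{\rm tr}\,h^2$. Putting $X=\xi$ in \eqref{E-3.24} and using $h\,\xi=0$ gives $\nabla_\xi\,h=0$; and since $X({\rm tr}\,h^2)=2\,{\rm tr}((\nabla_X\,h)\circ h)=2\sum_i g((\nabla_X\,h)(he_i),e_i)$, inserting \eqref{E-3.24} again reduces the sum to a multiple of $h\,\xi=0$, so ${\rm tr}\,h^2$ is constant. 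I expect the only genuine obstacle to be the vanishing step $g(R_{\xi,X}Y,Z)=0$ on $\ker\eta$: this is where the nearly cosymplectic geometry enters through \eqref{E-nS-04ccc} (hence, ultimately, the curvature hypothesis \eqref{E-nS-04c}), and where the weak correction $\widetilde Q$ must be controlled so that $\varphi$ remains invertible on $\ker\eta$; the rest is linear algebra with $h\,\xi=0$, $\eta\circ h=0$, the skew-symmetry of $h$ and of $\nabla_X\,h$, and \eqref{E-nS-01a}.
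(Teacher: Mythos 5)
Your proposal is correct and follows essentially the same route as the paper: \eqref{E-3.23} from the Killing identity \eqref{E-nS-04}, the key vanishing of the curvature (equivalently of $\nabla_X h$) on $\ker\eta\times\ker\eta$ via \eqref{E-nS-04ccc} and the invertibility of $\varphi|_{\ker\eta}$, reconstruction of \eqref{E-3.24}--\eqref{E-3.23b} from the $\xi$-components, and the same traces for \eqref{E-3.25} and the constancy of ${\rm tr}(h^2)$. The only cosmetic difference is that you compute the cross terms from the boundary value $R_{\xi,X}\xi=h^2X$, while the paper differentiates $g(hY,\xi)=0$ directly; these are equivalent via \eqref{E-nS-01a}.
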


\begin{proof}
By \eqref{E-nS-04} (since $\xi$ is a Killing vector) and \eqref{E-c-01}, we get \eqref{E-3.23}.
Replacing $Y$ by $\varphi Y$ and $Z$ by $\varphi Z$ in $g(R_{\,\xi, X}Y, Z) = -g((\nabla_X\,h)Y, Z)$, see \eqref{E-3.23},
and using \eqref{E-nS-04ccc}, we get $g((\nabla_X\,h)\,\varphi Y, \varphi Z) = 0$,~hence,
\begin{equation}\label{E-nS-03f}
 g((\nabla_X\,h) Y, Z) = 0,\quad Y, Z\in\ker\eta.
\end{equation}
Then, using \eqref{E-nS-03f}, we find the $\xi$-component and $\ker\eta$-component of $(\nabla_X\,h)Y$:
\begin{eqnarray*}
 && g((\nabla_X\,h)Y,\,\xi) = g(\nabla_X(h\,Y), \,\xi) = -g(h\,Y, \,\nabla_X\,\xi) = g(h^2 X, Y),\\
 && g((\nabla_X\,h)Y, Z) = \eta(Y)\,g((\nabla_X\,h)\,\xi, \,Z)
 = -\eta(Y)\,g(h^2 X, Z)\quad (Z\in\ker\eta),
\end{eqnarray*}
from which \eqref{E-3.24} follows.
From \eqref{E-3.24} with $X=\xi$ we find $\nabla_\xi\,h=0$.

Let $\{e_i\}\ (i=1,\ldots,2n+1)$ be a local orthonormal frame on $M$ with $e_{2n+1}=\xi$.
Putting $X=Y =e_i$ in \eqref{E-3.24}, then using \eqref{E-nS-04cc} and summing over $i=1,\ldots,2n+1$, we get~\eqref{E-3.25}.
Replacing $Y$ by $h Y$ in \eqref{E-3.24}, putting $Y=e_i$ in the gotten equation and summing over $i=1,\ldots, 2n+1$,
we get ${\rm tr}\,((\nabla_X\,h)\,h) = 0$. This implies $X({\rm tr}(h^2)) = 0\ (X\in\mathfrak{X}_M)$, i.e., ${\rm tr}(h^2) = const$.
\end{proof}

\begin{Remark}\label{Rem-delta}\rm
The function $\delta$ of a weak nearly cosymplectic manifold has the following symmetries:
\begin{eqnarray*}
 \delta(Y,X, Z,V) = \delta(X,Y, V,Z) = \delta(Z,V, X,Y) = -\delta(X,Y, Z,V) .
\end{eqnarray*}
If \eqref{E-nS-04c} is true, then by \eqref{E-3.23b}, we get
$\delta(\xi,Y,Z,V)\!=\delta(X,\xi,Z, V)\!=\delta(X,Y,\xi, V)\!=\delta(X,Y,Z, \xi)\!=0$.
\end{Remark}

\section{Main results}
\label{sec:04-ns}

In Section~\ref{subsec:04a-ns}, we prove the splitting of weak nearly cosymplectic manifolds with conditions \eqref{E-nS-10} and~\eqref{E-nS-04c}.
For almost contact metric manifolds, conditions \eqref{E-nS-10} and \eqref{E-nS-04c} become trivial.
In Section~\ref{subsec:04-ns} we characterize 5-dimensional weak nearly cosymplectic manifolds.

\subsection{The splitting theorem}
\label{subsec:04a-ns}

The following proposition generalizes \cite[Proposition~4.2]{NDY-2018}.

\begin{Proposition}\label{Prop-4.1}
For a weak nearly cosymplectic manifold with conditions \eqref{E-nS-10} and \eqref{E-nS-04c},
the eigenvalues (and their multiplicities) of the symmetric operator $h^2$ are constant.
\end{Proposition}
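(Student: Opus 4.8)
The plan is to show that the symmetric operator $h^2$ has locally constant eigenvalues, which by continuity forces them to be globally constant on a connected manifold; the multiplicities then follow automatically. Since $h^2$ is a smooth symmetric $(1,1)$-tensor field, its eigenvalues are continuous functions, and a standard way to prove they are constant is to exhibit each eigenvalue as a function whose derivative vanishes in every direction. The key structural facts I would lean on are $\nabla_\xi\,h=0$ and $\mathrm{tr}(h^2)=\mathrm{const}$ from Lemma~\ref{L-nS-04}, together with the commutation $h\,Q=Q\,h$ and anticommutation $h\,\varphi+\varphi\,h=0$ from Lemma~\ref{L-nS-02}. The anticommutation is crucial: it implies that if $X$ is an eigenvector of $h^2$ with eigenvalue $\mu$ (so $h^2X=\mu X$), then $\varphi X$ satisfies $h^2(\varphi X)=\varphi\,h^2X=\mu\,\varphi X$ as well, so the $\varphi$-invariant eigendistributions pair up and the nonzero eigenvalues come with even multiplicity.

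First I would fix a point and diagonalize $h^2$ on $\ker\eta$, writing the distinct eigenvalues as $\mu_1,\dots,\mu_r$ (all $\ge 0$ since $h$ is skew-symmetric, so $h^2$ is negative semidefinite — more precisely $g(h^2X,X)=-\|hX\|^2\le 0$, so I would track $-h^2$ and its nonnegative eigenvalues). The main computation is to differentiate the eigenvalue relation. For a local smooth unit eigenvector field $X$ of $h^2$ with eigenvalue $\mu$, I would differentiate $h^2 X=\mu X$ in an arbitrary direction $W$ and pair with $X$: using skew-symmetry of $h$ and the product rule, $W(\mu)=g((\nabla_W h^2)X,X)$, and then $\nabla_W h^2=(\nabla_W h)h+h(\nabla_W h)$. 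The covariant derivative $\nabla_W h$ is controlled by Lemma~\ref{L-nS-04}, equation~\eqref{E-3.24}: $(\nabla_W\,h)Y = g(h^2 W, Y)\,\xi - \eta(Y)\,h^2 W$. This is the workhorse, because for $Y\in\ker\eta$ it shows $\nabla_W h$ maps $\ker\eta$ essentially into the $\xi$-line, which makes the relevant $\ker\eta$-components of $(\nabla_W h^2)X$ vanish and forces $W(\mu)=0$.

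The step I expect to be the main obstacle is making the differentiation of eigenvalues rigorous at points where eigenvalues collide or where smooth eigenvector fields may fail to exist; the clean route is to avoid choosing individual eigenvector fields and instead work with the spectral projections, or equivalently with the elementary symmetric functions (coefficients of the characteristic polynomial) of $h^2$ restricted to $\ker\eta$. These symmetric functions are globally smooth regardless of multiplicity crossings, and showing each of them is constant — by the same mechanism, feeding \eqref{E-3.24} into the derivative of $\mathrm{tr}(h^{2k})$ and using $\mathrm{tr}((\nabla_W h)h^{2k-1})=0$, which follows from \eqref{E-3.24} exactly as $\mathrm{tr}((\nabla_X h)h)=0$ was derived in the proof of Lemma~\ref{L-nS-04} — yields that the characteristic polynomial of $h^2$ has constant coefficients. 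Constant coefficients give constant eigenvalues and constant multiplicities, completing the proof. I would therefore organize the argument around proving $W(\mathrm{tr}(h^{2k}))=0$ for all $k$ and all $W\in\mathfrak{X}_M$, which sidesteps the eigenvector regularity issue entirely while using only Lemma~\ref{L-nS-04} and the skew-symmetry of $h$.
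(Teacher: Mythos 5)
Your proposal is correct and rests on exactly the same mechanism as the paper's proof: feeding \eqref{E-3.24} into $\nabla_X h^2=(\nabla_X h)h+h(\nabla_X h)$ to get \eqref{E-nS-11}, and then showing the derivative of each eigenvalue vanishes (the paper differentiates $h^2Y=\mu Y$ for a local unit eigenvector field $Y\perp\xi$ and pairs with $Y$). Your reformulation via $W(\mathrm{tr}(h^{2k}))=0$ is only a technical repackaging of the same computation, though it does cleanly dispose of the smooth-eigenvector-field issue that the paper glosses over.
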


\begin{proof}
From \eqref{E-3.23b} and Lemma~\ref{L-nS-04} we obtain
\begin{equation}\label{E-nS-11}
 (\nabla_X\, h^2)Y
 = h(\nabla_X\, h)Y + (\nabla_X\, h) h Y
 = g(X, h^3Y)\,\xi - \eta(Y)\,h^3 X.
\end{equation}
Consider an eigenvalue $\mu$ of $h^2$ and a local unit vector field $Y$ orthogonal to $\xi$ such that $h^2Y = \mu Y$.
Applying \eqref{E-nS-11} for any nonzero vector fields $X$ and $Y\perp\xi$, we find $g((\nabla_X\, h^2)Y, Y)=0$, thus
\begin{eqnarray*}
 && 0 = g((\nabla_X\, h^2)Y, Y) = g(\nabla_X\,(h^2Y), Y) - g(h^2(\nabla_X\,Y), Y ) \\
 &&\ \ = X(\mu)\,g(Y, Y) + \mu\,g(\nabla_X\,Y, Y) - g(\nabla_X\,Y, h^2Y) = X(\mu)\,g(Y, Y),
\end{eqnarray*}
which implies that $X(\mu) = 0$ for all $X\in\mathfrak{X}_M$.
\end{proof}

By Proposition~\ref{Prop-4.1}, the spectrum of the self-adjoint operator $h^2$ has the~form
\begin{equation}\label{E-nS-11b}
  Spec(h^2) = \{0, -\lambda_1^2,\ldots -\lambda_r^2\} ,
\end{equation}
where $\lambda_i$ is a positive real number and $\lambda_i\ne \lambda_j$ for $i\ne j$.
If $X\ne0$ is an eigenvector of $h^2$ with eigenvalue $-\lambda^2_i$, then $X, \varphi X, hX$ and $h\,\varphi X$ are orthogonal
nonzero eigenvectors of $h^2$ with eigenvalue $-\lambda^2_i$.
Since $h(\xi)=0$, the eigenvalue 0 has multiplicity $2p+1$ for some integer $p\ge0$.

Denote by $D_0$ the smooth distribution of the eigenvectors with eigenvalue 0 orthogonal to $\xi$.
Let $D_i$ be the smooth distribution of the eigenvectors with eigenvalue $-\lambda^2_i$.
Remark that the distributions $D_0$ and $D_i$ belong to $\ker\eta$ and are $\varphi$-invariant and $h$-invariant.

The following proposition generalizes \cite[Proposition~4.3]{NDY-2018}.

\begin{Proposition}\label{P-4.3}
Let $M^{\,2n+1}(\varphi,Q,\xi,\eta,g)$ be a weak nearly cosymplectic manifold with conditions \eqref{E-nS-10} and \eqref{E-nS-04c},
and let the spectrum of the self-adjoint operator $h^2$ have the form \eqref{E-nS-11b}. Then,

$(a)$ each distribution $[\xi]\oplus D_i\ (i = 1,\ldots, r)$ is integrable with totally geodesic leaves.

\noindent
Moreover, if the eigenvalue $0$ of $h^2$ is not simple, then

$(b)$ the distribution $D_0$ is integrable with totally geodesic leaves, and each leaf of $D_0$ is endowed with a weak nearly K\"{a}hler structure
$(\bar\varphi, \bar g)$
with the property $\bar\nabla(\bar\varphi^{\,2})=0$;

$(c)$ the distribution $[\xi]\oplus D_1\oplus\ldots\oplus D_r$ is integrable with totally geodesic leaves.
\end{Proposition}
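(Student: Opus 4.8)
The plan is to establish the three claims about integrability and total geodesy of the eigendistributions of $h^2$ by combining the curvature identities from Lemmas~\ref{L-R01} and~\ref{L-nS-04} with the parallel transport behavior of $h^2$ encoded in \eqref{E-nS-11}. The central observation is that \eqref{E-nS-11} shows $(\nabla_X h^2)Y$ lies in the span of $\xi$ and $h^3 X$ whenever $Y \perp \xi$, so covariant differentiation essentially preserves each eigenspace modulo the Reeb direction. To prove that a distribution $D$ (one of $[\xi]\oplus D_i$, $D_0$, or $[\xi]\oplus D_1\oplus\cdots\oplus D_r$) is totally geodesic, I would show that for $X, Y \in D$ the covariant derivative $\nabla_X Y$ again lies in $D$; integrability then follows since $\nabla$ is torsion-free, so $[X,Y] = \nabla_X Y - \nabla_Y X \in D$ as well.

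For part $(a)$, I would take $X, Y \in [\xi]\oplus D_i$ and a test vector $W$ in a complementary eigenspace $D_j$ ($j \ne i$, including $j=0$), and compute $g(\nabla_X Y, W)$. The strategy is to relate this to $g((\nabla_X h^2)Y, W)$: since $h^2 Y = -\lambda_i^2 Y$ (or $h^2\xi = 0$) while $h^2 W = -\lambda_j^2 W$, differentiating $g(h^2 Y, W) = 0$ and using \eqref{E-nS-11} isolates $(\lambda_j^2 - \lambda_i^2)\,g(\nabla_X Y, W)$ against the explicit right-hand side of \eqref{E-nS-11}, which involves only $\xi$ and $h^3 X$. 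Because $W \perp \xi$ and $h^3 X$ stays inside $[\xi]\oplus D_i$ when $X$ does (as $D_i$ is $h$-invariant), the right-hand side pairs to zero with $W \in D_j$, forcing $g(\nabla_X Y, W) = 0$. Since the eigenvalues $-\lambda_i^2$ are distinct, $\lambda_j^2 - \lambda_i^2 \ne 0$ and we conclude $\nabla_X Y \in [\xi]\oplus D_i$. The $\xi$-direction must be handled separately using $\nabla_\xi\xi = 0$, the skew-symmetry of $h$, and \eqref{E-3.24}.

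Parts $(b)$ and $(c)$ would be handled by the same eigenvalue-separation mechanism, treating $D_0$ (respectively $[\xi]\oplus D_1\oplus\cdots\oplus D_r$) as a single eigenspace of $h^2$ against its orthogonal complement. For $(b)$, once $D_0$ is totally geodesic, each leaf inherits $\bar\varphi := \varphi|_{D_0}$ and the induced metric $\bar g$; I would verify that $\bar\varphi$ is skew-symmetric with $\bar\varphi^2$ negative definite from \eqref{E-nS-2.1}--\eqref{E-nS-2.2}, and that the weak nearly K\"ahler condition $(\bar\nabla_X\bar\varphi)X = 0$ descends from \eqref{E-nS-00b} using that the leaf is totally geodesic (so $\bar\nabla$ agrees with the tangential part of $\nabla$). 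The extra property $\bar\nabla(\bar\varphi^2) = 0$ should follow from \eqref{E-nS-10} restricted to $D_0$, since on $D_0$ we have $h = 0$ and $\varphi^2 = -Q$ on $\ker\eta$, so $\bar\nabla\bar\varphi^2 = -\bar\nabla Q|_{D_0} = 0$.

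The main obstacle I anticipate is the bookkeeping in part $(a)$ when verifying that $\nabla_X Y$ has no $\xi$-component leaking out of (or into) the distribution, and dually that the $\xi$-leg of $[\xi]\oplus D_i$ interacts correctly: here one genuinely needs \eqref{E-3.24} and the anticommutation $h\varphi + \varphi h = 0$ from \eqref{E-nS-01b} to control terms like $g(\nabla_X \xi, W) = g(hX, W)$, which vanish precisely because $hX \in D_i$ is orthogonal to $W \in D_j$. The delicate point is that the weak-structure corrections (terms involving $\widetilde Q$ in $\delta$ and in Lemma~\ref{L-R01}) do not disturb the eigenspace decomposition; this is where condition \eqref{E-nS-04c} and the $Q$-invariance (hence $h$-commutativity \eqref{E-nS-01d}, giving simultaneous diagonalizability of $h^2$ and $Q$) must be invoked to ensure $\widetilde Q$ preserves each $D_i$ and thus contributes nothing to the cross-terms.
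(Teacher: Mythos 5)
Your proposal is correct and follows essentially the same route as the paper: both rest on the formula \eqref{E-nS-11} for $(\nabla_X h^2)Y$ together with the $\varphi$-, $h$- and $Q$-invariance of the $D_i$, the only cosmetic difference being that you test $\nabla_X Y$ against complementary eigenvectors while the paper computes $h^2(\nabla_X Y)$ directly and strips the $\xi$-component using $\varphi^2$ and $\widetilde Q$.
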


\begin{proof}
Consider an eigenvector $X$ of $h^2$ with eigenvalue $-\lambda^2_i$. Then $\nabla_X\,\xi = hX \in D_i$.
On the other hand, \eqref{E-nS-11} implies that $\nabla_\xi\,h^2 = 0$, and thus $\nabla_\xi X$ is also an eigenvector of $h^2$ with eigenvalue $-\lambda^2_i$.
Now, taking $X, Y \in D_i$ and applying \eqref{E-nS-11}, we get
\[
 h^2(\nabla_X\,Y) = -\lambda^2_i\,\nabla_X Y - (\nabla_X\,h^2)Y = -\lambda^2_i\,\nabla_X Y + \lambda^2_i\,g(X, hY)\,\xi.
\]
Therefore,
 $h^2(\varphi^2\nabla_X Y) = \varphi^2(h^2\nabla_X Y) = -\lambda^2_i\,\varphi^2(\nabla_X Y)$;
hence, $\varphi^2\nabla_X Y \in D_i$.
Similarly, using \eqref{E-nS-01d}, we get $\widetilde Q\nabla_X Y \in D_i$.
It follows that
\[
 \nabla_X Y = -\widetilde Q\,\nabla_X Y -\varphi^2\nabla_X Y + \eta(\nabla_X Y)\,\xi,
\]
see \eqref{E-nS-2.1}, belongs to the distribution $[\xi]\oplus D_i$.
This proves (a).

Assume that the eigenvalue $0$ of $h^2$ is not simple.
By \eqref{E-nS-11}, we get $(\nabla_X\,h^2)Y = 0$ for any linear independent vectors $X,Y$ in $D_0$, hence $h^2(\nabla_X Y) = 0$.
Moreover,
\[
 g(\nabla_X Y, \xi) = -g(Y, \nabla_X\,\xi) = -g(Y, h X) = 0.
\]
Thus, the distribution $D_0$ defines a totally geodesic foliation.
By \eqref{E-nS-01b} and \eqref{E-nS-01d}, the leaves of $D_0$ are $\varphi$-invariant and $Q$-invariant.
Thus, the weak nearly cosymplectic structure on $M$ with conditions \eqref{E-nS-10}  and \eqref{E-nS-04c}
induces a weak nearly K\"{a}hler structure $(\bar\varphi, \bar g)$ on each leaf of $D_0$ with the property $\bar\nabla(\bar\varphi^{\,2})=0$,
where $\bar\nabla$ is the Levi-Civita connection of $\bar g$. This proves~(b).

To prove (c) taking (a) into account, it is enough to show that $g(\nabla_X Y, Z) = 0$ for every
$X\in D_i$, $Y\in D_j\ (i\ne j)$ and $Z \in D_0$.
Indeed, from \eqref{E-nS-11}, we obtain
\begin{eqnarray*}
 g(\nabla_X\,Y, Z) && = -(1/\lambda^2_j)\,g(\nabla_X (h^2Y), Z)
 = -(1/\lambda^2_j)\,g((\nabla_X\, h^2)Y + h^2(\nabla_X Y), Z) \\
 && = -(1/\lambda^2_j)\,\eta(Z)\,g(X, h^3Y) -(1/\lambda^2_j)\,g(\nabla_X Y, h^2 Z),
\end{eqnarray*}
which vanishes since $\eta(Z)=0$ and $h^2Z=0$.
\end{proof}

The following proposition generalizes \cite[Proposition~4.1]{NDY-2018} and does not use Lemmas~\ref{L-R02}--\ref{L-R03}.

\begin{Proposition}\label{Th-4.1}
For a weak nearly cosymplectic (non-weak-cosymplectic) manifold, $h\equiv0$ if and only if the manifold
is locally isometric to the Riemannian product of a real line and a weak nearly K\"{a}hler (non-weak-K\"{a}hler) manifold.
\end{Proposition}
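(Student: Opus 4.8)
The plan is to prove both directions by connecting the vanishing of $h=\nabla\xi$ to the parallelism and integrability structure already developed. For the forward direction, suppose $h\equiv 0$. By the remark following \eqref{E-c-01}, $h=0$ is equivalent to $\ker\eta$ being integrable, so the distribution $\ker\eta$ defines a foliation. First I would show that $\ker\eta$ is totally geodesic: for $X,Y\in\ker\eta$ we have $g(\nabla_X Y,\xi)=-g(Y,\nabla_X\xi)=-g(Y,hX)=0$ since $hX=0$, so $\nabla_X Y\in\ker\eta$. Next I would establish that $\xi$ is parallel: from \eqref{E-c-01} directly $\nabla_X\xi=hX=0$ for all $X$, hence $\xi$ is a parallel unit vector field. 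By the de Rham decomposition (locally), a manifold carrying a parallel unit vector field splits isometrically as $\mathbb{R}\times\bar M$, where the $\mathbb{R}$-factor is the integral curve of $\xi$ and $\bar M$ is a leaf of $\ker\eta$. It remains to verify that $\bar M$ inherits a weak nearly K\"{a}hler structure. I would set $\bar\varphi=\varphi|_{\ker\eta}$ and $\bar g=g|_{\ker\eta}$; since $\ker\eta$ is $\varphi$-invariant and $\varphi^2=-Q+\eta\otimes\xi$ restricts to $-Q|_{\ker\eta}$ (negative definite by positivity of $Q$), $\bar\varphi$ is a skew-symmetric tensor with $\bar\varphi^2$ negative definite. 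The nearly K\"{a}hler condition $(\bar\nabla_X\bar\varphi)Y+(\bar\nabla_Y\bar\varphi)X=0$ follows from \eqref{E-nS-00b} together with the fact that, because $\xi$ is parallel and the leaf is totally geodesic, $\bar\nabla$ is just the restriction of $\nabla$; here I would use \eqref{E-nS-01c}, namely $(\nabla_X\varphi)\xi=-\varphi hX=0$, to confirm that the normal component drops out cleanly.

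For the converse, suppose $M$ is locally isometric to $\mathbb{R}\times\bar M$ with $\bar M$ weak nearly K\"{a}hler, with $\xi$ tangent to the $\mathbb{R}$-factor. Then $\xi$ is parallel with respect to the product connection, so $hX=\nabla_X\xi=0$ for all $X$, giving $h\equiv0$ immediately.

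The parenthetical equivalence (non-weak-cosymplectic $\leftrightarrow$ non-weak-K\"{a}hler) is what requires care, and I expect this to be the main obstacle. I would argue it by characterizing when the structure degenerates: the weak nearly cosymplectic manifold is weak cosymplectic precisely when $\nabla\varphi=0$, and under the splitting with $h\equiv 0$ and $\xi$ parallel, $\nabla\varphi=0$ on $M$ holds if and only if $\bar\nabla\bar\varphi=0$ on $\bar M$, i.e.\ $\bar M$ is weak K\"{a}hler. Concretely, the only components of $\nabla\varphi$ beyond $\bar\nabla\bar\varphi$ involve $\xi$, and \eqref{E-nS-01c} shows these vanish automatically once $h=0$; thus $\nabla\varphi$ on $M$ reduces to $\bar\nabla\bar\varphi$ on the leaf, so normality of one is equivalent to parallelism of the other. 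This contrapositive bookkeeping—matching ``non-weak-cosymplectic'' on $M$ with ``non-weak-K\"{a}hler'' on $\bar M$—is the delicate point, since it demands checking that no mixed $\xi$-direction terms can independently force $\nabla\varphi=0$; the vanishing of $h$ is exactly what rules this out.
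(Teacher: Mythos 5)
Your proposal is correct and follows essentially the same route as the paper: with $h\equiv 0$ the distribution $\ker\eta$ is integrable with totally geodesic leaves, $\xi$ is parallel, de Rham's theorem gives the local splitting $\mathbb{R}\times\bar M$ with the induced weak nearly K\"{a}hler structure on the leaf, and the converse is immediate from $\nabla\xi=0$ on the product. The only caveat is in your treatment of the parenthetical (which the paper's own proof does not even address): you invoke the characterization ``weak cosymplectic $\Leftrightarrow\nabla\varphi=0$,'' whereas the paper defines weak cosymplectic as a \emph{normal} weak almost cosymplectic structure, so this equivalence is a nontrivial fact in the weak setting that would need a citation or a separate argument.
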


\begin{proof}
For every vector fields $X, Y$ orthogonal to $\xi$ we have
\begin{equation}\label{E-c-01b}
 2\,d\eta(X, Y ) = g(\nabla_X\,\xi, Y) - g(\nabla_Y\,\xi, X) = 2\,g(hX, Y).
\end{equation}
Thus, by the condition $h = 0$, the contact distribution $\ker\eta$ is integrable.
Any integral submanifold of $\ker\eta$ is a totally geodesic hypersurface.
Indeed, we have
 $g(\nabla_X\,Y, \xi)=-g(Y, hX)=0$ for every $X,Y\in\ker\eta$.
Since $\nabla_\xi\,\xi=0$, by de Rham Decomposition Theorem (e.g., \cite{KN-69}), the manifold is locally isometric to the Riemannian product $\mathbb{R}\times\bar M$.
The weak almost contact metric structure induces on $\bar M$ a weak nearly K\"{a}hler structure.
Conversely, if a weak nearly cosymplectic manifold is locally isometric to the Riemannian product $\mathbb{R}\times\bar M$, where $\bar M$ is a
weak nearly K\"{a}hler manifold and $\xi = (0, \partial_t)$, then $d\eta(X, Y)=0\ (X,Y\in\ker\eta)$.
By \eqref{E-c-01b} and $h\,\xi=0$, we get~$h=0$.
\end{proof}

We will generalize Theorem~4.5 in \cite{NDY-2018} on splitting of nearly cosymplectic manifolds.

\begin{Theorem}
Let $M^{\,2n+1}(\varphi, Q, \xi, \eta, g)$ be a weak nearly cosymplectic (non-weak-cosymplectic) manifold of dimension $2n+1>5$
with conditions \eqref{E-nS-10} and \eqref{E-nS-04c}.
Then $M$ is locally isometric to one of the following Riemannian products:
\[
 \mathbb{R}\times \bar M^{\,2n},\quad B^5 \times\bar M^{\,2n-4},
\]
where $\bar M$ is endowed with a weak nearly K\"{a}hler structure $(\bar\varphi,\bar g)$
with the property $\bar\nabla(\bar\varphi^{\,2})=0$,
and $B^5$ is a 5-dimensional weak nearly cosymplectic (non-weak-cosymplectic) manifold satisfying~\eqref{E-nS-10} and~\eqref{E-nS-04c}.
If the manifold $M$ is complete and simply connected, then
the isometry is global.
\end{Theorem}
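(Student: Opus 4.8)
The plan is to read off the splitting from the spectral decomposition \eqref{E-nS-11b} of $h^2$ together with the totally geodesic foliations of Propositions \ref{Prop-4.1}--\ref{P-4.3}, and then invoke the de Rham decomposition theorem. First I would dispose of the case $h\equiv0$: here Proposition \ref{Th-4.1} gives at once a local isometry with $\mathbb{R}\times\bar M^{\,2n}$, where $\bar M$ carries a weak nearly K\"ahler structure; and since $2n+1>5$ the distribution $D_0=\ker\eta$ is nontrivial (the eigenvalue $0$ of $h^2$ is not simple), so Proposition \ref{P-4.3}(b) strengthens $\bar M$ to satisfy $\bar\nabla(\bar\varphi^{\,2})=0$. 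This produces the first Riemannian product.

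Now suppose $h\not\equiv0$. By Proposition \ref{Prop-4.1} the eigenvalues of $h^2$ and their multiplicities are constant, so the nonzero part $\mathcal D:=D_1\oplus\ldots\oplus D_r$ of the spectrum is a genuine, nowhere-degenerate distribution on which $h$ is an isomorphism. By Proposition \ref{P-4.3}(b),(c) the orthogonal complementary distributions $D_0$ and $[\xi]\oplus\mathcal D$ are both integrable with totally geodesic leaves; since $TM=D_0\oplus\big([\xi]\oplus\mathcal D\big)$ and both factors are autoparallel, $M$ is locally a Riemannian product $B\times\bar M$. Here $\bar M$ is a leaf of $D_0$, weak nearly K\"ahler with $\bar\nabla(\bar\varphi^{\,2})=0$ by \ref{P-4.3}(b), while $B$ is a leaf of $[\xi]\oplus\mathcal D$ and inherits a weak nearly cosymplectic structure satisfying \eqref{E-nS-10} and \eqref{E-nS-04c}, on which $h$ restricts to a non-degenerate operator on $\ker\eta\cap TB=\mathcal D$ (so, as $d\eta(X,Y)=g(hX,Y)\ne0$, the factor $B$ is non-weak-cosymplectic).

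It remains to prove $\dim B=5$, equivalently $\dim\mathcal D=4$, and this is the main obstacle. I would first observe that on each $D_i$ we have $h^2=-\lambda_i^2\,\mathrm{id}$, while \eqref{E-nS-01b} and \eqref{E-nS-01d} give $h\varphi=-\varphi h$ and $hQ=Qh$, and \eqref{E-nS-2.1} gives $\varphi^2=-Q$ (since $\eta|_{D_i}=0$); as $Q$ is symmetric positive definite and commutes with $h$ and $\varphi$, the operators $\lambda_i^{-1}h$ and $Q^{-1/2}\varphi$ are anticommuting almost complex structures on $D_i$, whence $\dim D_i$ is divisible by $4$. The hard part is the rigidity forcing a single eigenvalue ($r=1$) of multiplicity exactly $4$. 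For uniqueness I would feed $X\in D_i$ and $Y\in D_j$ with $i\ne j$ into the curvature identities \eqref{E-3.6}--\eqref{E-3.5} of Lemma \ref{L-R01} and the derivative formula \eqref{E-nS-11}, together with Lemma \ref{L-R03}, and use that on the totally geodesic leaf $B$ the mixed curvature contributions must cancel, forcing $D_j=0$. For the multiplicity I would restrict to $B$ and exploit the Ricci formula \eqref{E-3.25}, the constancy ${\rm tr}(h^2)=\mathrm{const}$ and the curvature-invariance \eqref{E-nS-04cc}: these yield an Einstein-type constraint which, combined with the quaternionic structure above, rules out $\dim\mathcal D\ge8$. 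This is precisely where the $5$-dimensional theory developed in Section \ref{subsec:04-ns}, identifying $B$ as a $5$-dimensional weak nearly cosymplectic manifold, enters.

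With $\dim B=5$ one obtains the second product $B^5\times\bar M^{\,2n-4}$, the factor $\bar M^{\,2n-4}$ being nontrivial since $\dim D_0=2n-4\ge2$. Finally, when $M$ is complete and simply connected I would upgrade the local splitting to a global one: the orthogonal distributions $D_0$ and $[\xi]\oplus\mathcal D$ are $\nabla$-parallel, so the global de Rham decomposition theorem applies, and the structures induced on the two factors are exactly the weak nearly K\"ahler structure with $\bar\nabla(\bar\varphi^{\,2})=0$ and the $5$-dimensional weak nearly cosymplectic structure described above.
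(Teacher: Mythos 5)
Your overall architecture (dichotomy on $h$, Propositions~\ref{Prop-4.1}--\ref{P-4.3}, de Rham decomposition) matches the paper's, but the decisive step --- showing $\dim B=5$ --- is not actually proved in your proposal, and the route you sketch for it is not the one that works. You propose to establish separately that $r=1$ (by feeding $X\in D_i$, $Y\in D_j$ into \eqref{E-3.6}--\eqref{E-3.5} so that ``mixed curvature contributions must cancel, forcing $D_j=0$'') and that the multiplicity is exactly $4$ (via an ``Einstein-type constraint'' from \eqref{E-3.25} ruling out $\dim\mathcal D\ge 8$). Neither claim is substantiated: no computation is indicated that would make the mixed curvature terms force $D_j=0$, and \eqref{E-3.25} only constrains ${\rm Ric}(\xi,\cdot)$, which gives no bound on $\dim\mathcal D$ by itself. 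The paper's argument is different and much shorter: the leaf $B$ of $[\xi]\oplus D_1\oplus\ldots\oplus D_r$ inherits a weak nearly cosymplectic structure satisfying \eqref{E-nS-10} and \eqref{E-nS-04c} for which $0$ \emph{is} a simple eigenvalue of the induced $h^2$, so Theorem~\ref{T-4.4} applies verbatim and yields $\dim B=5$ (hence $r=1$ and multiplicity $4$ for free). Theorem~\ref{T-4.4} itself is proved not by curvature rigidity but by the differential-form identities $d\Phi_0=3\,\eta\wedge\Phi_1$, $d\Phi_1=3\,\eta\wedge\Phi_2$, the consequence $d\eta\wedge\Phi_1=0$ of $d^2\Phi_0=0$, and the contactness of $\eta$ (obtained from the positive $\xi$-sectional curvature \eqref{E-nS-K} and the weak K-contact criterion). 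You mention ``the $5$-dimensional theory of Section~\ref{subsec:04-ns}'' only at the end, as the place where work remains, rather than invoking Theorem~\ref{T-4.4} as the finished tool it is.

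A second, smaller gap: in the case $h\not\equiv0$ you apply Proposition~\ref{P-4.3}(b) to get the weak nearly K\"ahler factor, but part (b) requires that the eigenvalue $0$ of $h^2$ be non-simple, i.e., $D_0\ne0$. This must be established \emph{before} the splitting, and the paper does so by the contrapositive of Theorem~\ref{T-4.4}: since $\dim M=2n+1>5$, the eigenvalue $0$ cannot be simple. Your proposal only verifies $\dim D_0=2n-4\ge2$ \emph{after} assuming $\dim\mathcal D=4$, which inverts the logical order. With these two points repaired --- (i) cite Theorem~\ref{T-4.4} once on $M$ to get $D_0\ne0$, and (ii) cite it again on $B$ to get $\dim B=5$ --- your argument coincides with the paper's.
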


\begin{proof}
If $h\equiv0$, then by Proposition~\ref{Th-4.1}, $M$ is locally isometric to $\mathbb{R}\times \bar M^{\,2n}$.
Let $h\ne0$ on $\ker\eta\setminus\{0\}$ and \eqref{E-nS-11b},
where $r\ge1$ and each $\lambda_i$ is a positive number.
Since $\dim M > 5$, by Theorem~\ref{T-4.4} in Section~\ref{subsec:04-ns}, the eigenvalue 0 is not a simple eigenvalue.
By (b) and (c) of Proposition~\ref{P-4.3}, and according to de Rham Decomposition Theorem (e.g., \cite{KN-69}), $M$ is locally isometric to the Riemannian product $B\times\bar M$, where $B$ is an integral submanifold
of the distribution $[\xi]\oplus D(-\lambda_1^2)\oplus\ldots\oplus D(-\lambda_r^2)$, and $\bar M$ is an integral submanifold of $D_0$,
which is endowed with a weak nearly K\"{a}hler structure $(\bar\varphi,\bar g)$ and, by the condition \eqref{E-nS-10},
has the property $\bar\nabla(\bar\varphi^{\,2})=0$.

Note that $B$ is endowed with an induced weak nearly cosymplectic (non-weak-cosymplectic) structure, for which 0 is a simple eigenvalue of the operator~$h^2$.
By~Theorem~\ref{T-4.4} in Section~\ref{subsec:04-ns}, $B$ is a 5-dimensional manifold and $\lambda_1 =\ldots = \lambda_r$.
Consequently, $\dim\bar M=2n-4$.
If the manifold $M$ is complete and simply connected, then we apply the de Rham Decomposition Theorem.
\end{proof}

\subsection{Characterization of 5-dimensional weak nearly cosymplectic manifolds}
\label{subsec:04-ns}

Here, we use Lemmas~\ref{L-R02}--\ref{L-R03} to characterize 5-dimensional weak nearly cosymplectic manifolds.


The following result generalizes Theorem~4.4 in \cite{NDY-2018} on 5-dimensional cosymplectic manifolds.

\begin{Theorem}\label{T-4.4}
Let $M^{\,2n+1}(\varphi, Q, \xi, \eta, g)$ be a weak nearly cosymplectic manifold with conditions \eqref{E-nS-10} and \eqref{E-nS-04c}
such that $0$ is a simple eigenvalue of $h^2$. Then $M$ is a 5-dimensional manifold.
\end{Theorem}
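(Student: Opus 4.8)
The plan is to determine the covariant derivative $\nabla\varphi$ completely from the hypotheses, convert this into a rigid curvature identity along $\ker\eta$, and then use the eigendistribution geometry of Proposition~\ref{P-4.3} to force the contact distribution to be four-dimensional. Since $0$ is a simple eigenvalue of $h^2$, the eigenspace $D_0$ is trivial, so $\ker\eta=D_1\oplus\cdots\oplus D_r$ and $h$ is invertible on $\ker\eta$; as $\varphi^2=-Q$ is negative definite there, $\varphi h$ is invertible on $\ker\eta$ as well. Because each $D_i$ contains, together with any nonzero $X$, the independent vectors $\varphi X,hX,h\varphi X$, we have $\dim D_i\ge 4$, so that $\dim M=1+\sum_i\dim D_i=5$ is \emph{equivalent} to $r=1$ and $\dim D_1=4$. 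This is the statement I aim to establish.

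First I would pin down $\nabla\varphi$. Applying Lemma~\ref{L-R03} with $X,Y,Z\in\ker\eta$ makes the right-hand side of \eqref{E-3.50} vanish, so $g((\nabla_X\varphi)Y,\varphi hZ)=0$; since $\varphi h$ maps $\ker\eta$ onto $\ker\eta$, the vector $(\nabla_X\varphi)Y$ has no $\ker\eta$-component, and computing its $\xi$-component from the skew-symmetry of $\nabla_X\varphi$ and \eqref{E-nS-01c} gives
\[
 (\nabla_X\varphi)Y=g(\varphi hX,Y)\,\xi,\qquad X,Y\in\ker\eta .
\]
Together with \eqref{E-nS-01c} and the identity $\nabla_\xi\varphi=\varphi h$ (which follows from \eqref{E-nS-00b} and \eqref{E-nS-01c}), this fixes $\nabla\varphi$ on all of $TM$. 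Differentiating once more, using $\nabla h$ from \eqref{E-3.24}, and substituting into the Ricci identity \eqref{E-nS-05}, I would obtain for $X,Y,V,Z\in\ker\eta$ an explicit identity expressing $g(R_{X,Y}\varphi V,Z)+g(R_{X,Y}V,\varphi Z)$ as a quadratic algebraic expression in $h$ and $\varphi h$ alone.

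Next I would extract the dimension. By \eqref{E-nS-01b} the operator $h$ anticommutes with $\varphi$, so on each $D_i$ the rescaled operator $\phi:=\lambda_i^{-1}h$ is a $g$-compatible almost complex structure anticommuting with $\varphi$, and $\{\phi,\varphi,\varphi\phi\}$ forms a quaternionic-type triple. I would feed the eigenvector relations $h^2X=-\lambda_i^2X$ into the curvature identity above, together with \eqref{E-3.6}--\eqref{E-3.5} and the expression \eqref{E-3.23b} for $R_{\xi,X}$, and contract the resulting relation against this triple; the expected outcome is an equation forcing $\dim D_i=4$ and $r=1$. To localize the argument I would pass, via Proposition~\ref{P-4.3}(a), to a totally geodesic leaf of $[\xi]\oplus D_i$, on whose contact distribution $h^2=-\lambda_i^2\,\mathrm{id}$, reducing the analysis to the single-eigenvalue situation where the triple acts transitively.

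The decisive difficulty is precisely this last step. The algebra of the triple $\{\phi,\varphi,\varphi\phi\}$ only forces $\dim\ker\eta$ to be divisible by $4$; upgrading divisibility to the value $4$ requires genuinely using the \emph{differential} nearly (non-weak-cosymplectic) condition through the curvature identity, since the naive contractions collapse to the trivial $0=0$ by skew-symmetry of $R_{X,Y}$ and of $\varphi\phi$. Ruling out $r\ge2$ is equally delicate: the Reeb field obstructs a direct de Rham splitting, because for $Y,Y'$ in the same $D_i$ the derivative $\nabla_{Y}Y'$ acquires a nonzero $\xi$-component $-g(Y',hY)\,\xi$, so the orthogonal complement of $[\xi]\oplus D_i$ fails to be totally geodesic. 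I therefore expect the core of the proof to be a carefully chosen contraction of the curvature identity against $\phi$ and $\varphi\phi$ that pins the single nonzero eigenspace to real dimension four, exactly as in \cite{NDY-2018}.
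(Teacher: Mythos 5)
Your opening reduction (since $0$ is simple, $\ker\eta=D_1\oplus\cdots\oplus D_r$ with each $\dim D_i\ge 4$, so the claim is $r=1$ and $\dim D_1=4$) and your first computational step are both sound and agree with the paper: specializing \eqref{E-3.50} to $X,Y,Z\in\ker\eta$ and using the invertibility of $\varphi h$ on $\ker\eta$ does give $(\nabla_X\varphi)Y=g(\varphi hX,Y)\,\xi$ there, which is exactly the identity \eqref{E-c-03b} the paper derives. But from that point on your proposal is a plan, not a proof, and you say so yourself: you concede that the quaternionic-triple contractions ``collapse to the trivial $0=0$'' and that you only \emph{expect} some unspecified contraction of a second-derivative curvature identity to pin $\dim D_i$ to $4$ and rule out $r\ge 2$. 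That expected contraction is precisely the content of the theorem, so the decisive step is missing, and I see no reason to believe the curvature-contraction route closes: the identity $R_{\xi,X}=-\nabla_X h$ together with \eqref{E-3.24} already determines all the curvature components you propose to contract, and they are consistent with $h^2=-\lambda^2\,\mathrm{id}$ on a contact distribution of any dimension divisible by $4$; the obstruction to large dimension is not visible at that level.

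The idea you are missing is global-linear rather than curvature-theoretic. The paper packages the pointwise identity $(\nabla_X\varphi)Y=g(\varphi hX,Y)\,\xi$ (and its analogue for $\nabla(\varphi h)$, obtained from \eqref{E-3.24}) into the exterior differential equations $d\Phi_0=3\,\eta\wedge\Phi_1$ and $d\Phi_1=3\,\eta\wedge\Phi_2$ for the $2$-forms $\Phi_k(X,Y)=g(\varphi h^kX,Y)$; then $d^2\Phi_0=0$ forces $d\eta\wedge\Phi_1=0$, see \eqref{E-Nic-27}. Separately, simplicity of the eigenvalue $0$ gives positive $\xi$-sectional curvature $K(\xi,X)=|hX|^2$ by \eqref{E-nS-K}, whence by \cite[Theorem~3]{rov-117} the structure is weak K-contact and $\eta$ is a contact form (Lemma~\ref{L-contact}). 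If $2n+1>5$, the Lefschetz-type fact that wedging with the nondegenerate $2$-form $d\eta$ is injective on $2$-forms of $\ker\eta$ in rank $2n>4$ then forces $\Phi_1=0$, i.e.\ $\varphi h=0$, contradicting $h\ne 0$ on $\ker\eta\setminus\{0\}$. It is this interplay between $d^2=0$, the contactness of $\eta$, and the injectivity of $d\eta\wedge\,\cdot\,$ in high rank that converts ``divisible by $4$'' into ``equal to $4$''; without it (or a genuine substitute), your argument does not establish the theorem.
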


\begin{proof}
 We consider 2-forms $\Phi_k(X,Y)=g(\varphi h^k X, Y)$, where $k = 0, 1, 2$; in particular, $\Phi_0=-\Phi$.
It is easy to calculate
  $3\,d\Phi(X,Y,Z) = g((\nabla_X\,\varphi)Z,Y) +g((\nabla_Y\,\varphi)X, Z) +g((\nabla_Z\,\varphi)Y, X)$,
see \cite{RP-2}.
We will show that
\begin{equation}\label{E-c-03}
 d\Phi_0 = 3\,\eta\wedge \Phi_1,\quad d\Phi_1 = 3\,\eta\wedge \Phi_2.
\end{equation}
Indeed, applying \eqref{E-c-01} and $\varphi\,\xi=0$, we find the $\xi$-component of $(\nabla_X\,\varphi) Y$:
\begin{equation}\label{E-c-03xi}
  g((\nabla_X\,\varphi) Y, \xi) = - g((\nabla_X\,\varphi)\,\xi, Y) = g(\varphi\nabla_X\,\xi, Y) = g(\varphi h X, Y).
\end{equation}
Replacing $Z$ by $\varphi Z$ in \eqref{E-3.50} and using \eqref{E-nS-01b}, we obtain
\begin{eqnarray}\label{E-3.50b}
 && g((\nabla_{X}\,\varphi)Y, -\varphi^2 h Z) = \eta(X)\,g(hY, h\varphi QZ) -\eta(Y)\,g(hX, h\varphi QZ) .
\end{eqnarray}
By conditions, $h\ne0$ on $\ker\eta\setminus\{0\}$, thus from \eqref{E-3.50b} we get
 $g((\nabla_{X}\,\varphi)Y, V) =0$ for $X,Y,V\in\ker\eta$.
By the above and \eqref{E-c-03xi}, using $X=X^\top+\eta(X)\,\xi$ and $Y=Y^\top+\eta(Y)\,\xi$, we obtain
\begin{eqnarray}\label{E-c-03b}
\nonumber
 && g((\nabla_{X}\,\varphi)Y, V) = \eta(V)\,g((\nabla_{X^\top}\,\varphi)Y^\top, \xi)
 +\eta(X)\,g((\nabla_{\,\xi}\,\varphi)Y^\top, V) +\eta(Y)\,g((\nabla_{X^\top}\,\varphi)\,\xi, V) \\
\nonumber
 && = -\eta(V)\,g((\nabla_{X^\top}\,\varphi)\,\xi, Y^\top)
 -\eta(X)\,g((\nabla_{Y^\top}\,\varphi)\,\xi, V) +\eta(Y)\,g((\nabla_{X^\top}\,\varphi)\,\xi, V) \\
 && = \eta(V)\,g(\varphi h X, Y) +\eta(X)\,g(\varphi h Y, V) +\eta(Y)\,g(\varphi h V, X),
\end{eqnarray}
which implies that $d\Phi_0 = 3\,\eta\wedge \Phi_1$.
Similarly, using \eqref{E-c-03b} and \eqref{E-3.23b}, we get
\begin{eqnarray*}
 g((\nabla_X\,(\varphi\,h))Y, Z) = g((\nabla_X\,\varphi)hY, Z) + g(\varphi(\nabla_X\,h)Y, Z) \\
 = \eta(X)\,g(\varphi\,h^2 Y, Z) +\eta(Y)\,g(\varphi\,h^2 Z, X) +\eta(Z)\,g(\varphi\,h^2 X, Y),
\end{eqnarray*}
which implies $d\Phi_1 = 3\,\eta\wedge \Phi_2$ and completes the proof of \eqref{E-c-03}.
From \eqref{E-c-03} we obtain
\begin{equation}\label{E-Nic-27}
 0= d^2\Phi_0 = 3\,d\eta\wedge \Phi_1 - 3\,\eta\wedge d\Phi_1 =3\,d\eta\wedge \Phi_1.
\end{equation}
 Next we will show that if 0 is a simple eigenvalue of $h^2$, then $\eta$ is a contact form.
 We assume \eqref{E-nS-11b} with $r\ge1$, 0 being a simple eigenvalue.
From \eqref{E-3.23b} with $Y=\xi$, using \eqref{E-nS-01a}, we find the $\xi$-sectional curvature:
\begin{equation}\label{E-nS-K}
 K(\xi,X) = g(hX, hX)\quad (X\in\ker\eta,\ g(X,X)=1).
\end{equation}
By \eqref{E-nS-K} and the assumption, the $\xi$-sectional curvature of $M$ is positive.
By \cite[Theorem~3]{rov-117}, we get a weak K-contact structure on $M$
(i.e., a weak contact metric manifold, whose Reeb vector field is Killing, see \cite{rov-117}); thus, $\eta$ is a contact 1-form.
If $2n + 1 > 5$, $\eta$ being a contact form (see Lemma~\ref{L-contact}), from $d\eta\wedge\Phi_1 = 0$, see \eqref{E-Nic-27},
we get $\Phi_1 = 0$ -- a contradiction to Proposition~3.2 in~\cite{NDY-2018}.
Hence, $M$ is a 5-dimensional manifold and the multiplicity of the eigenvalue $-\lambda^2$ is 4.
\end{proof}

\section{Proofs of Lemmas~\ref{L-R01} and \ref{L-R03}}
\label{sec:app}

Here, in the proofs of lemmas we use the approach of \cite{E-2005}, and our formulas also contain terms depending on the tensors $Q$ and $\widetilde Q$.

\smallskip\textbf{Proof of Lemma~\ref{L-R01}}.
The proof of \eqref{E-3.4} is similar to the proof of equation (3.4) in \cite{E-2005}, we present it
because some formulas appearing in the proof of \eqref{E-3.4} are also used in the proof of \eqref{E-3.6}.
Differentiating \eqref{E-nS-00b}, we find
\begin{eqnarray}\label{EF-nS-01}
 (\nabla^2_{X,Y}\,\varphi)Z + (\nabla^2_{X,Z}\,\varphi)Y = 0.
\end{eqnarray}
Applying the Ricci identity \eqref{E-nS-05}, from \eqref{EF-nS-01} and the skew-symmetry of $\nabla^2_{X,Y}\,\varphi$ we get
\begin{eqnarray}\label{E-3.7}
 g(R_{X,Y}Z, \varphi V) -g(R_{X,Y}V, \varphi Z)  + g((\nabla^2_{X,Z}\,\varphi) Y, V) - g((\nabla^2_{Y,Z}\,\varphi)X, V) = 0.
\end{eqnarray}
By Bianchi and Ricci identities, we find
\begin{eqnarray}\label{E-3.8}
\nonumber
 && g(R_{X,Y}Z, \varphi V) = -g(R_{Y,Z}X, \varphi V) - g(R_{Z,X}Y, \varphi V) \\
 && = g((\nabla^2_{Y,Z}\,\varphi) V, X) - g((\nabla^2_{Z,Y}\,\varphi)V, X)
   -g(R_{Y,Z}V, \varphi X) - g(R_{Z,X}Y, \varphi V).
\end{eqnarray}
Substituting \eqref{E-3.8} into \eqref{E-3.7}, it follows that
\begin{eqnarray}\label{E-3.9}
\nonumber
 && g(R_{X,Z}Y, \varphi V) -g(R_{X,Y}V, \varphi Z) - g(R_{Y,Z}V, \varphi X) \\
 && -g((\nabla^2_{Z,Y}\,\varphi)V, X) - g((\nabla^2_{X,Z}\,\varphi)V, Y) = 2\,g((\nabla^2_{Y,Z}\,\varphi)X, V).
\end{eqnarray}
On the other hand, using the Ricci identity \eqref{E-nS-05}, we see that
\begin{eqnarray}\label{E-3.10}
 g(R_{X,Z}Y, \varphi V) -g(R_{X,Z}V, \varphi Y)  - g((\nabla^2_{X,Z}\,\varphi)Y, V) +g((\nabla^2_{Z,X}\,\varphi)Y, V) = 0.
\end{eqnarray}
Adding \eqref{E-3.10} to \eqref{E-3.9}, we get
\begin{eqnarray}\label{E-3.11}
  2\,g(R_{X,Z}Y, \varphi V) -g(R_{X,Y}V, \varphi Z)  - g(R_{Y,Z}V, \varphi X) -g(R_{X,Z}V, \varphi Y)
  = 2\,g((\nabla^2_{Y,V}\,\varphi)Z, X).
\end{eqnarray}
Swapping $Y$ and $V$ in \eqref{E-3.11}, we find
\begin{eqnarray}\label{E-3.12}
  2\,g(R_{X,Z}V, \varphi Y) -g(R_{X,V}Y, \varphi Z)  - g(R_{V,Z}Y, \varphi X) -g(R_{X,Z}Y, \varphi V)
  = 2\,g((\nabla^2_{V,Y}\,\varphi)Z, X).
\end{eqnarray}
Subtracting \eqref{E-3.12} from \eqref{E-3.11}, and using the Bianchi and Ricci identities, we get
the equality,
which by replacing $Z$ and $Y$ gives \eqref{E-3.4}.
Replacing $X$ by $\varphi X$ in \eqref{E-3.4} and using \eqref{E-nS-2.1}, we have
\begin{eqnarray}\label{E-3.13}
\nonumber
 && -g(R_{Q X,Y}Z, V) +\eta(X)\,g(R_{\,\xi,Y}Z, V) +g(R_{\varphi X, \varphi Y}Z, V) \\
 && +g(R_{\varphi X, Y}\varphi Z, V) +g(R_{\varphi X, Y}Z, \varphi V) = 0.
\end{eqnarray}
Exchanging $X$ and $Y$ in \eqref{E-3.13}, we find
\begin{eqnarray}\label{E-3.14}
  g(R_{X, Q Y}Z, V) {+}\eta(Y)\,g(R_{\,\xi,X}Z, V) {-} g(R_{\varphi X, \varphi Y}Z, V)
  {+} g(R_{\varphi Y, X}\varphi Z, V) {+} g(R_{\varphi Y, X}Z, \varphi V) = 0.
\end{eqnarray}
Subtracting \eqref{E-3.14} from \eqref{E-3.13}, we obtain
\begin{eqnarray}\label{E-3.15}
\nonumber
 && 2\,g(R_{\varphi X,\varphi Y}Z, V) -2\,g(R_{X,Y}Z, V) +\eta(X)\,g(R_{\,\xi,Y}Z, V) -\eta(Y)\,g(R_{\,\xi,X}Z, V) \\
\nonumber
 && + g(R_{\varphi X, Y}\varphi Z, V) - g(R_{\varphi Y, X}\varphi Z, V) + g(R_{\varphi X, Y}Z, \varphi V) - g(R_{\varphi Y, X}Z, \varphi V) \\
 && - g(R_{\widetilde Q X, Y}Z, V) - g(R_{X, \widetilde Q Y}Z, V) = 0.
\end{eqnarray}
Then, replacing $Z$ by $\varphi Z$ and also $V$ by $\varphi V$ in \eqref{E-3.4} and using \eqref{E-nS-2.1}, we get two equations
\begin{eqnarray}\label{E-3.16}
 & g(R_{X, Y}Q Z, V) = \eta(Z)\,g(R_{X,Y}\,\xi, V) {+} g(R_{X,Y}\varphi Z, \varphi V) {+} g(R_{X,\varphi Y}\varphi Z, V) {+} g(R_{\varphi X,Y}\varphi Z, V),\\
\label{E-3.17}
 & g(R_{X, Y}Z, Q V) = \eta(V)\,g(R_{X,Y}Z,\,\xi) {+} g(R_{X,Y}\varphi Z, \varphi V) {+} g(R_{\varphi X, Y}Z, \varphi V) {+} g(R_{X,\varphi Y}Z, \varphi V).
\end{eqnarray}
Adding \eqref{E-3.16} to \eqref{E-3.17},
and substituting the gotten equation into \eqref{E-3.15}, we have
\begin{eqnarray}\label{E-3.18}
\nonumber
 && 2\,g(R_{\varphi X,\varphi Y}Z, V) -2\,g(R_{X,Y}\varphi Z, \varphi V) -\eta(Z)\,g(R_{X,Y}\,\xi, V)  -\eta(V)\,g(R_{X,Y}Z,\,\xi) \\
 && +\,\eta(X)\,g(R_{\,\xi,Y}Z, V) -\eta(Y)\,g(R_{\,\xi,X}Z, V) +\delta(X,Y,Z,V) = 0.
\end{eqnarray}
Replacing $X$ by $\varphi X$ and also $Y$ by $\varphi Y$ in \eqref{E-3.18} and using \eqref{E-nS-2.1}, we obtain
\begin{eqnarray}\label{E-3.19}
\nonumber
 && 2\,g(R_{Q X, Q Y}Z, V) {-} 2\,\eta(X)\,g(R_{\,\xi, Q Y}Z, V) {+} 2\,\eta(Y)\,g(R_{\,\xi,Q X}Z, V)
 {-} 2\,g(R_{\varphi X, \varphi Y}\,\varphi Z, \varphi V) \\
 && +\,\delta(\varphi X, \varphi Y,Z,V) = \eta(Z)\,g(R_{\,\xi, V}\varphi X,\varphi Y) -\eta(V)\,g(R_{\,\xi, Z}\,\varphi X, \varphi Y) .
\end{eqnarray}
Replacing $V$ by $\xi$ in \eqref{E-3.19}, we obtain
\begin{eqnarray}\label{E-3.20}
\nonumber
 && 2\,g(R_{\,Q X, Q Y}Z, \xi) -2\,\eta(X)\,g(R_{\,\xi, Q Y}Z, \xi) +2\,\eta(Y)\,g(R_{\,\xi,Q X}Z, \xi)  \\
 && +\,g(R_{\,\xi, Z}\,\varphi X, \varphi Y)  +\delta(\varphi X, \varphi Y,Z, \,\xi) = 0.
\end{eqnarray}
Replacing $X$ by $\varphi X$ and $Y$ by $\varphi Y$ in \eqref{E-3.20},
and adding the result to \eqref{E-3.20},
we get
gives
\begin{eqnarray}\label{E-3.22}
\nonumber
 3\,g(R_{\,\xi, Z}\,\varphi X, \varphi Y) = -4\,g(R_{\,\xi, Z}\,\widetilde Q\,\varphi X, \varphi Y) -4\,g(R_{\,\xi, Z}\,\varphi X, \widetilde Q\,\varphi Y) \\
 -4\,g(R_{\,\xi, Z}\,\widetilde Q\,\varphi X, \widetilde Q\,\varphi Y)
 + \delta(\varphi X, \varphi Y, Z, \,\xi) +2\,\delta(\varphi^2 X, \varphi^2 Y, Z, \,\xi).
\end{eqnarray}
From \eqref{E-3.22}, using \eqref{E-nS-04c}
and the equality $\delta(\varphi X,\varphi Y,Z,\xi)=0$,
we get \eqref{E-nS-04ccc}. Since $\varphi|_{\,\ker\eta}$ is non-degenerate, the distribution $\ker\eta$ is curvature invariant, see \eqref{E-nS-04cc}.
Using \eqref{E-3.18}, \eqref{E-3.23b} and symmetry of $h^2$ and $Q$, we obtain \eqref{E-3.6}.
Note that \eqref{E-3.23b} uses \eqref{E-3.24}--\eqref{E-3.23}, which require conditions \eqref{E-nS-10} and \eqref{E-nS-04c}.
Replacing $X$ by $\varphi X$ and $Y$ by $\varphi Y$ in \eqref{E-3.6} and using \eqref{E-nS-2.1}, we get \eqref{E-3.5}.
\hfill$\square$

\smallskip\textbf{Proof of Lemma~\ref{L-R03}}.
We prove \eqref{E-3.50} following the proof of (3.50) in~\cite{E-2005}.
Differentiating \eqref{E-3.29} and using
$g((\nabla_X\varphi)(\nabla_V\varphi)Y, Z)=-g((\nabla_V\varphi)Y, (\nabla_X\varphi)Z)$ gives
\begin{eqnarray}\label{E-3.32}
\nonumber
 && g((\nabla_V\,\varphi)Y, (\nabla_X\,\varphi)Z) + g((\nabla_X\,\varphi)Y, (\nabla_V\,\varphi)Z)
 = g((\nabla^2_{V,X}\,\varphi)\varphi Y, Z) \\
\nonumber
 && + g((\nabla^2_{V,X}\,\varphi)\varphi Z, Y)
    -g(hV,Z)\,g(hX,Y) -\eta(Z)\,g((\nabla_V h)X,Y) \\
 && - g(h V,Y)\,g(h X,Z) -\eta(Y)\,g((\nabla_V h)X,Z)
  -\nabla_V\big( \eta(Z)\,g(h X, \widetilde Q Y) \big).
\end{eqnarray}
Using \eqref{EF-nS-01}, \eqref{E-3.11} and \eqref{E-nS-2.1}, we find $\nabla^2$-terms in \eqref{E-3.32}:
\begin{eqnarray}
\label{ER-nS-04aa}
\nonumber
 && g((\nabla^2_{V,X}\,\varphi)\varphi Z, Y) = g((\nabla^2_{V, \varphi Z}\,\varphi)Y, X)
  = -g(R_{X,Y}V, Q Z) +\eta(Z)\,g(R_{X,Y}V, \xi) \\
 &&\qquad -(1/2)\,g(R_{X,V}\varphi Z, \varphi Y) - (1/2)\,g(R_{V,Y}\varphi Z, \varphi X) - (1/2)\,g(R_{X,Y}\varphi Z, \varphi V),\\
 \label{ER-nS-04a}
\nonumber
 && g((\nabla^2_{V,X}\,\varphi)\varphi Y, Z) = g((\nabla^2_{V, \varphi Y}\,\varphi)Z, X)
  = -g(R_{X,Z}V, Q Y) +\eta(Y)\,g(R_{X,Z}V, \xi) \\
 &&\qquad -(1/2)\,g(R_{X,V}\varphi Y, \varphi Z) - (1/2)\,g(R_{V,Z}\varphi Y, \varphi X) - (1/2)\,g(R_{X,Z}\varphi Y, \varphi V).
\end{eqnarray}
Using \eqref{E-3.23b}, we get from \eqref{E-3.32} and \eqref{ER-nS-04aa}--\eqref{ER-nS-04a} the equality
\begin{eqnarray}\label{ER-nS-03b}
\nonumber
 && g((\nabla_V\,\varphi)Y, (\nabla_X\,\varphi)Z) + g((\nabla_X\,\varphi)Y, (\nabla_V\,\varphi)Z) = -g(R_{X,Z}V, Q Y) \\
\nonumber
 && + \eta(Y)\,g(R_{X,Z}V, \xi) - g(R_{X,Y}V, Q Z) + \eta(Z)\,g(R_{X,Y}V, \xi) \\
\nonumber
 && - (1/2)\,g(R_{V,Z}\varphi Y, \varphi X) - (1/2)\,g(R_{X,Z}\varphi Y, \varphi V) - (1/2)\,g(R_{V,Y}\varphi Z, \varphi X)\\
\nonumber
 && - (1/2)\,g(R_{X,Y}\varphi Z, \varphi V) - g(hV,Z)\,g(hX,Y) - \eta(Z)\,g((\nabla_V h)X,Y) \\
 && - g(h V,Y)\,g(h X,Z) - \eta(Y)\,g((\nabla_V h)X,Z) - \nabla_V\big( \eta(Z)\,g(h X, \widetilde Q Y) \big).
\end{eqnarray}
From \eqref{ER-nS-03b}, applying \eqref{E-3.6} twice,
then replacing $(\nabla_V h)X$ by \eqref{E-3.24}, we get
\begin{eqnarray}\label{E-3.34}
\nonumber
 && g((\nabla_X\,\varphi)Z, (\nabla_V\,\varphi)Y) + g((\nabla_X\,\varphi)Y, (\nabla_V\,\varphi)Z) + g(R_{X,Z}V, Q Y) + g(R_{X,Y}V, Q Z)  \\
\nonumber
 && - g(R_{V,Z}\varphi X, \varphi Y) - g(R_{X,Z}\varphi V, \varphi Y) + g(h V,Y)\,g(h X,Z) + g(hV,Z)\,g(hX,Y) \\
 && = (1/4)\delta(X,Z,V,Y) - (1/4)\delta(V,Z,X,Y) -\nabla_V\big( \eta(Z)\,g(h X, \widetilde Q Y) \big).
\end{eqnarray}
Replacing in \eqref{E-3.34} $Z$ by $\varphi Z$ and $V$ by $\varphi V$, we find
\begin{align}\label{E-3.35}
\nonumber
 & g((\nabla_X\,\varphi)\varphi Z, (\nabla_{\varphi V}\,\varphi)Y) {+} g((\nabla_X\,\varphi)Y, (\nabla_{\varphi V}\,\varphi)\varphi Z)
 {+} g(R_{X,\varphi Z}\varphi V, Q Y) {-} g(R_{X,\varphi Z}\varphi^2 V, \varphi Y)   \\
\nonumber
 & + g(R_{X,Y}\varphi V,\varphi Q Z) - g(R_{\varphi V,\varphi Z}\varphi X,\varphi Y) + g(hX,Y)\,g(h\varphi V,\varphi Z) + g(h X,\varphi Z)\,g(h\varphi V,Y) \\
 & = (1/4)\,\delta(X,\varphi Z,\varphi V,Y) - (1/4)\,\delta(\varphi V,\varphi Z,X,Y) .
\end{align}
Using \eqref{E-3.29}, \eqref{E-3.30}, \eqref{E-3.31}, \eqref{E-nS-2.1} and Lemma~\ref{L-nS-02}, we find
\begin{eqnarray}\label{E-3.36}
\nonumber
&&\qquad g((\nabla_X\,\varphi)\varphi Z, (\nabla_{\varphi V}\,\varphi)Y) = g(Q(\nabla_X\,\varphi)Z, (\nabla_{V}\,\varphi)Y) \\
\nonumber
&& - g(X, \varphi h Z)\,g(V, \varphi h Y) - \eta(V)\,g((\nabla_X\,\varphi)Z, \varphi h Y) \\
&& + \eta(Z) g(\varphi h X, (\nabla_V\,\varphi)Y) - \eta(Z)\,\eta(V) g(h X, h Y) + g(h X, Q Z) g(h V, Q Y),\\
\label{E-3.37}
\nonumber
&&\qquad g((\nabla_X\,\varphi)Y, (\nabla_{\varphi V}\,\varphi)\varphi Z) = -g(Q(\nabla_X\,\varphi)Y, (\nabla_{V}\,\varphi)Z) \\
&& +\eta(V) g(\varphi h Z, (\nabla_{X} \varphi)Y) -\eta(Z) g(\varphi h V, (\nabla_{X} \varphi)Y)
 +g(\varphi h X, Y) g(\varphi h Z, \widetilde Q V).
\end{eqnarray}
From \eqref{E-3.23b} and Lemma~\ref{L-nS-02}, we have
\begin{align}\label{E-3.38}
\nonumber
 & g(R_{X, \varphi Z}\varphi V, Y) - g(R_{X, \varphi Z}\varphi^2 V, \varphi Y) = g(R_{X, \varphi Z}\varphi V, Y) \\
 & +g(R_{X, \varphi Z} Q V, \varphi Y) -\eta(X)\,\eta(V)\,g(h^2 Z, Q  Y).
\end{align}
On the other hand, from \eqref{E-3.4}, \eqref{E-3.6} and \eqref{E-nS-2.1} it follows that
\begin{eqnarray}\label{E-3.39}
 && g(R_{X, Z}\varphi V, \varphi Y) + g(R_{X, Z}\varphi^2 V, Y) + g(R_{X, \varphi Z}\varphi V, Y) + g(R_{\varphi X, Z}\varphi V, Y) = 0 ,\\
\label{E-3.40}
\nonumber
 && -\,g(R_{\varphi X,Z}\varphi V, Q Y) +\eta(Y)\,g(R_{\varphi X, Z}\varphi V,\xi) =-g(R_{Q X,\varphi Z}V,\varphi Y) \\
 &&\qquad +\,\eta(X)\,g(R_{\,\xi,\varphi Z}V,\varphi Y) +(1/2)\,\delta(\varphi X,Z,V,\varphi Y).
\end{eqnarray}
Summing up the formulas \eqref{E-3.39} and \eqref{E-3.40}
(and using
\eqref{E-3.40}, \eqref{E-3.23b} and Lemma~\ref{L-nS-02}), we obtain
\begin{eqnarray}\label{E-3.41}
\nonumber
 && g(R_{X, \varphi Z}\varphi V, Y) + g(R_{Q X,\varphi Z}V,\varphi Y) = \eta(X)\,\eta(V)\,g(h^2 Y, \widetilde Q Z) \\
\nonumber
 && -\eta(Y)\,\eta(Z)\,g(h^2 V, Q X) + g(R_{X, Z} Q V, Y)  - g(R_{X, Z}\varphi V, \varphi Y) \\
 && + \eta(Z)\,\eta(V)\,g(h^2 Y, X)
 + g(R_{\varphi X,Z}\varphi V, \widetilde Q Y) + (1/2)\,\delta(\varphi X,Z,V,\varphi Y).
\end{eqnarray}
Substituting \eqref{E-3.41} into \eqref{E-3.38}, we get
\begin{eqnarray}\label{E-3.42}
\nonumber
 && g(R_{X, \varphi Z}\,\varphi V, Y) - g(R_{X, \varphi Z}\,\varphi^2 V, \varphi Y) = g(R_{X, Z}\,Q V, Y) - g(R_{X, Z}\varphi V, \varphi Y) \\
\nonumber
 && + \eta(V)\,\eta(Z)\,g(h^2Y, X)  - \eta(Z)\,\eta(Y)\,g(h^2 X, Q V)  - \eta(X)\,\eta(V)\,g(h^2Z, Y) \\
 && - g(R_{\widetilde Q X, \varphi Z} V, \varphi Y) + g(R_{X, \varphi Z} \widetilde Q V, \varphi Y) + g(R_{\varphi X, Z}\varphi V, \widetilde Q Y) +(1/2)\,\delta(\varphi X,Z,V,\varphi Y) .
\end{eqnarray}
By means of \eqref{E-3.23b} and \eqref{E-3.6}--\eqref{E-3.5}, we have
\begin{eqnarray}\label{E-3.43}
\nonumber
 && g(R_{X, Y}\varphi V, \varphi Z) - g(R_{\varphi V, \varphi Z}\varphi X, \varphi Y) =  g(R_{V, Z}\varphi X,\varphi Y)  -g(R_{V, Z}Q X, Q Y) \\
\nonumber
 &&  -\eta(X)\,\eta(Z)\,g(h^2 Q Y, V) +\eta(X)\,\eta(V)\,g(h^2 Q Y, Z) +\eta(Y)\,\eta(Z)\,g(h^2 Q X, V) \\
 &&  -\eta(Y)\,\eta(V)\,g(h^2 Q X, Z) - (1/2)\,\delta(\varphi X,\varphi Y,Z,V) - (1/2)\,\delta(X,Y,Z,V) .
\end{eqnarray}
Substituting \eqref{E-3.36}, \eqref{E-3.37}, \eqref{E-3.42} and \eqref{E-3.43} into \eqref{E-3.35},
and using Lemma~\ref{L-nS-02}, we obtain
\begin{eqnarray}\label{E-3.44}
\nonumber
 && g(Q(\nabla_X\,\varphi)Z, (\nabla_{V}\,\varphi)Y) - g(Q(\nabla_X\,\varphi)Y, (\nabla_{V}\,\varphi)Z) \\
\nonumber
 &&  + \eta(V) g((\nabla_{X} \varphi)Y, \varphi h Z) - \eta(V)\,g((\nabla_X\,\varphi)Z, \varphi h Y)
 +\eta(Z) g((\nabla_V\,\varphi)Y, \varphi h X) \\
\nonumber
 && - \eta(Z) g((\nabla_{X} \varphi)Y, \varphi h V) + g(h X, Q Z) g(h V, Q Y) - g(hX,Y)\,g(h V, Q Z) \\
\nonumber
 && + 2\,\eta(Z)\,\eta(V) g(h^2 X, Y) - \eta(X)\,\eta(Z)\,g(h^2 Y, QV) - \eta(Y)\,\eta(V)\,g(h^2 X, QZ) \\
\nonumber
 && + g(R_{X, Z}\,Q V, Y) - g(R_{X, Z}\varphi V, \varphi Y) + g(R_{V, Z}\varphi X,\varphi Y) - g(R_{V, Z}\,Q X, Q Y)
\end{eqnarray}
\begin{eqnarray}
\nonumber
&& = g(R_{\widetilde Q X, \varphi Z} V, \varphi Y) {-} g(R_{X, \varphi Z} \widetilde Q V, \varphi Y) {-} g(R_{X,\varphi Z}\varphi V, \widetilde Q Y)
{-} g(R_{X,Y}\varphi V, \varphi \widetilde Q Z) \\
\nonumber
 && - g(R_{\varphi X, Z}\varphi V, \widetilde Q Y) - \eta(X)\,\eta(V)\,g(h^2 Y, \widetilde QZ) - g(\varphi h X, Y) g(\varphi h Z, \widetilde Q V) \\
 \nonumber
 && - (1/2)\,\delta(\varphi X,Z,V,\varphi Y) + (1/2)\,\delta(\varphi X,\varphi Y,Z,V) + (1/2)\,\delta(X,Y,Z,V) \\
 && +(1/4)\,\delta(X,\varphi Z,\varphi V,Y) - (1/4)\,\delta(\varphi V,\varphi Z,X,Y) . 
\end{eqnarray}
Adding \eqref{E-3.44} to \eqref{E-3.34}, we obtain
\begin{eqnarray}\label{E-3.45}
\nonumber
 && 2\,g((\nabla_X\,\varphi)Z, (\nabla_V\,\varphi)Y) + g(\widetilde Q(\nabla_X\,\varphi)Z, (\nabla_{V}\,\varphi)Y)
 - g(\widetilde Q(\nabla_X\,\varphi)Y, (\nabla_{V}\,\varphi)Z) \\
\nonumber
 && + \eta(V) g((\nabla_{X} \varphi)Y, \varphi h Z) - \eta(V)\,g((\nabla_X\,\varphi)Z, \varphi h Y) +\eta(Z) g((\nabla_V\,\varphi)Y, \varphi h X) \\
\nonumber
 && - \eta(Z) g((\nabla_{X} \varphi)Y, \varphi h V) + g(h X, Q Z) g(h V, Q Y) + g(h V,Y)\,g(h X,Z) \\
\nonumber
 && + 2\,\eta(Z)\,\eta(V) g(h^2 X, Y) - \eta(X)\,\eta(Z)\,g(h^2 Y, QV) - \eta(Y)\,\eta(V)\,g(h^2 X, QZ) \\
\nonumber
 && + 2\,g(R_{X, Z}\,V, Y) - 2\,g(R_{X,Z}\varphi V, \varphi Y) - g(R_{V, Z}\,Q X, Q Y) + g(R_{X,Y}V, Q Z) \\
\nonumber
&& = g(R_{\widetilde Q X, \varphi Z} V, \varphi Y) {-} g(R_{X, \varphi Z} \widetilde Q V, \varphi Y) {-} g(R_{X,\varphi Z}\varphi V, \widetilde Q Y) \\
\nonumber
 && - g(R_{X, Z}\,\widetilde Q V, Y) -g(R_{X,Z}V, \widetilde Q Y) -g(R_{X,Y}\varphi V, \varphi\widetilde Q Z) -g(R_{\varphi X, Z}\varphi V,\widetilde Q Y) \\
\nonumber
 && - \eta(X)\,\eta(V)\,g(h^2 Y, \widetilde QZ) + g(hX,Y)\,g(h V, \widetilde Q Z) - g(\varphi h X, Y) g(\varphi h Z, \widetilde Q V) \\
 \nonumber
 && - (1/2)\,\delta(\varphi X,Z,V,\varphi Y) + (1/2)\,\delta(\varphi X,\varphi Y,Z,V) + (1/2)\,\delta(X,Y,Z,V) \\
 \nonumber
 && +(1/4)\delta(X,Z,V,Y) - (1/4)\delta(V,Z,X,Y) -\nabla_V\big( \eta(Z)\,g(h X, \widetilde Q Y) \big) \\
 && +(1/4)\,\delta(X,\varphi Z,\varphi V,Y) - (1/4)\,\delta(\varphi V,\varphi Z,X,Y).
\end{eqnarray}
Swapping $X\leftrightarrow Z$ and $V\leftrightarrow Y$ in \eqref{E-3.45},
then subtracting the gotten equation from \eqref{E-3.45} and using \eqref{E-nS-00b}, we~get
\begin{eqnarray}\label{E-3.46}
\nonumber
 && \eta(V) g((\nabla_{X} \varphi)Y, \varphi h Z) - \eta(V)\,g((\nabla_X\,\varphi)Z, \varphi h Y) +\eta(Z) g((\nabla_V\,\varphi)Y, \varphi h X) \\
\nonumber
 && - \eta(Z) g((\nabla_{X} \varphi)Y, \varphi h V) - \eta(Y) g((\nabla_{Z} \varphi)V, \varphi h X) + \eta(Y)\,g((\nabla_Z\,\varphi)X, \varphi h V) \\
\nonumber
 &&  -\eta(X) g((\nabla_Y\,\varphi)V, \varphi h Z) + \eta(X) g((\nabla_{Z} \varphi)V, \varphi h Y) \\
\nonumber
 && + 2\,\eta(Z)\,\eta(V) g(h^2 X, Y) - 2\,\eta(X)\,\eta(Y) g(h^2 Z, V)  \\
\nonumber
 &&  + g(R_{Z,V}\,Q X, Q Y) + g(R_{Z,V}\,Q X, Y) - g(R_{X,Y}\,Q Z, V) - g(R_{X,Y}\,Q Z, Q V) \\
\nonumber
&& = g(R_{\widetilde Q X, \varphi Z} V, \varphi Y) - g(R_{X,\varphi Z}\varphi V, \widetilde Q Y) + g(R_{Z,V}\varphi Y, \varphi\widetilde Q X) \\
 \nonumber
 &&  - g(R_{X,Y}\varphi V, \varphi\widetilde Q Z) - g(R_{\widetilde Q Z, \varphi X} Y, \varphi V) + g(R_{Z,\varphi X}\varphi Y, \widetilde Q V) \\
\nonumber
 && - \eta(X)\,\eta(V)\,g(h^2 Y, \widetilde QZ) + g(hX,Y)\,g(h V, \widetilde Q Z) - g(\varphi h X, Y) g(\varphi h Z, \widetilde Q V) \\
\nonumber
 && + \eta(Z)\,\eta(Y)\,g(h^2 V, \widetilde QX) - g(hZ,V)\,g(h Y, \widetilde Q X) + g(\varphi h Z, V) g(\varphi h X, \widetilde Q Y) \\
 \nonumber
 && +(1/4)\,\delta(\varphi X,Z,\varphi Y,V) + (1/4)\,\delta(\varphi X,\varphi Y,Z,V) + (1/2)\delta(X,Y,Z,V) \\
 \nonumber
 && + (1/4)\,\delta(X,\varphi Z,\varphi V,Y) - (1/4)\,\delta(\varphi Z,\varphi V, X,Y) \\
 && + \nabla_Y\big( \eta(X)\,g(h Z, \widetilde Q V) \big)  - \nabla_V\big( \eta(Z)\,g(h X, \widetilde Q Y) \big) .
\end{eqnarray}
Putting $\xi$ on $V$ of \eqref{E-3.46},
then using Lemma~\ref{L-nS-02}, \eqref{E-nS-04cc},
$\nabla_\xi\big( \eta(Z)\,g(h X, \widetilde Q Y) \big)=0$, Remark~\ref{Rem-delta} (that all $\delta$-terms vanish),
\eqref{E-3.23b} and \eqref{E-nS-00b}, we get
\begin{eqnarray}\label{E-3.48}
\nonumber
 && g((\nabla_{Y} \varphi)X, \varphi h Z) + g((\nabla_X\,\varphi)Z, \varphi h Y) \\
 && = \eta(X)\,g(a_1(Z),Y) +\eta(Y)\,g(a_2(Z),X)+\eta(Z)\,g(a_3(X),Y) ,
\end{eqnarray}
where $a_1,a_2,a_3$ are self-adjoint linear operators on $TM$.
Swapping $X$ and $Y$ in \eqref{E-3.48}
and substituting \eqref{E-3.48} into the resulting equation, we get
\begin{eqnarray}\label{E-3.49}
\nonumber
 && g((\nabla_Z\,\varphi)X, \varphi h Y) + \,g((\nabla_Z\,\varphi)Y, \varphi h X) \\
 && = \eta(X)\,g(b_1(Z),Y) +\eta(Y)\,g(b_2(Z),X)+\eta(Z)\,g(b_3(X),Y) ,
\end{eqnarray}
where $b_1,b_2,b_3$ are self-adjoint linear operators on $TM$.
By swapping $Z$ and $X$ in \eqref{E-3.49},
and adding the resulting equation and \eqref{E-3.48} we get
\begin{eqnarray}\label{E-3.48b}
 g((\nabla_X\,\varphi)Y, \varphi h Z) = \eta(X)\,g(c_1(Z),Y) +\eta(Y)\,g(c_2(Z),X)+\eta(Z)\,g(c_3(X),Y) ,
\end{eqnarray}
where $c_1,c_2,c_3$ are self-adjoint linear operators on $TM$ and $c_1,c_2$ vanish on $\xi$.
Taking $Z=\xi$ in \eqref{E-3.48b}, we get $c_3=0$.
Taking $X=\xi$ in \eqref{E-3.48b} and using $(\nabla_\xi\,\varphi)X = \varphi h X$,
we find $c_1 = \varphi^2 h^2$.
Taking $Y=\xi$ in \eqref{E-3.48b}, and using \eqref{E-nS-01c}, we find $c_2 = -\varphi^2 h^2$.
Thus \eqref{E-3.48b} reduces to \eqref{E-3.50}.
\hfill$\square$

\bigskip

\textbf{Conclusions}.
We have shown that the weak nearly cosymplectic structure is useful for studying almost contact metric structures and Killing vector fields.
Some results on nearly cosymplectic manifolds (see \cite{C-MD-2016,E-2005,NDY-2018}) were extended
to weak nearly cosymplectic manifolds satisfying \eqref{E-nS-10} and \eqref{E-nS-04c} and the splitting theorem was proven.
Our conjecture is that the conditions \eqref{E-nS-10} and \eqref{E-nS-04c} are also sufficient for a weak nearly Sasakian manifold of dimension greater than five to be Sasakian -- this could answer the question in \cite{rov-2023} and generalize Theorem~3.3 in \cite{NDY-2018}.
Based on the numerous applications of nearly cosymplectic structures, we expect that certain weak structures will also be useful for differential geometry and physics, for example, in twistor string theory and~QFT.

\baselineskip=12.2pt

\end{document}